\definecolor{gray}{gray}{0.5}
\newcommand{\bc}{\begin{center}}
\newcommand{\ec}{\end{center}}
\newcommand{\cc}{\textrm{cc}}
\newcommand{\cy}{\textrm{cy}}
\newcommand{\ex}{\textrm{ex}}
\newtheorem{theorem}{Theorem}
\newtheorem{proposition}{Proposition}
\newtheorem{corollary}{Corollary}
\newtheorem{lemma}{Lemma}
\newtheorem{defi}{Definition}
\newcommand\blfootnote[1]{%
 \begingroup
 \renewcommand\thefootnote{}\footnote{#1}%
 \addtocounter{footnote}{-1}%
 \endgroup
}
\begin{document}


\selectlanguage{english}

\title{
Locating domination in bipartite graphs and their complements
}

\author{
C. Hernando\thanks{Departament de Matem\`atiques. Universitat Polit\`ecnica de Catalunya, Barcelona, Spain, carmen.hernando@upc.edu.
Partially supported by projects Gen. Cat. DGR  2017SGR1336 and MTM2015-63791-R (MINECO/FEDER).}
\and M. Mora\thanks{Departament de Matem\`atiques. Universitat Polit\`ecnica de Catalunya, Barcelona, Spain, merce.mora@upc.edu. Partially supported by projects Gen. Cat. DGR 2017SGR1336, MTM2015-63791-R (MINECO/FEDER) and H2020-MSCA-RISE project 734922 - CONNECT.}
\and I. M. Pelayo\thanks{Departament de Matem\`atiques. Universitat Polit\`ecnica de Catalunya, Barcelona, Spain, ignacio.m.pelayo@upc.edu.
Partially supported by projects MINECO MTM2014-60127-P, ignacio.m.pelayo@upc.edu.}
}

\date{}

\maketitle


\blfootnote{\begin{minipage}[l]{0.3\textwidth} \includegraphics[trim=10cm 6cm 10cm 5cm,clip,scale=0.15]{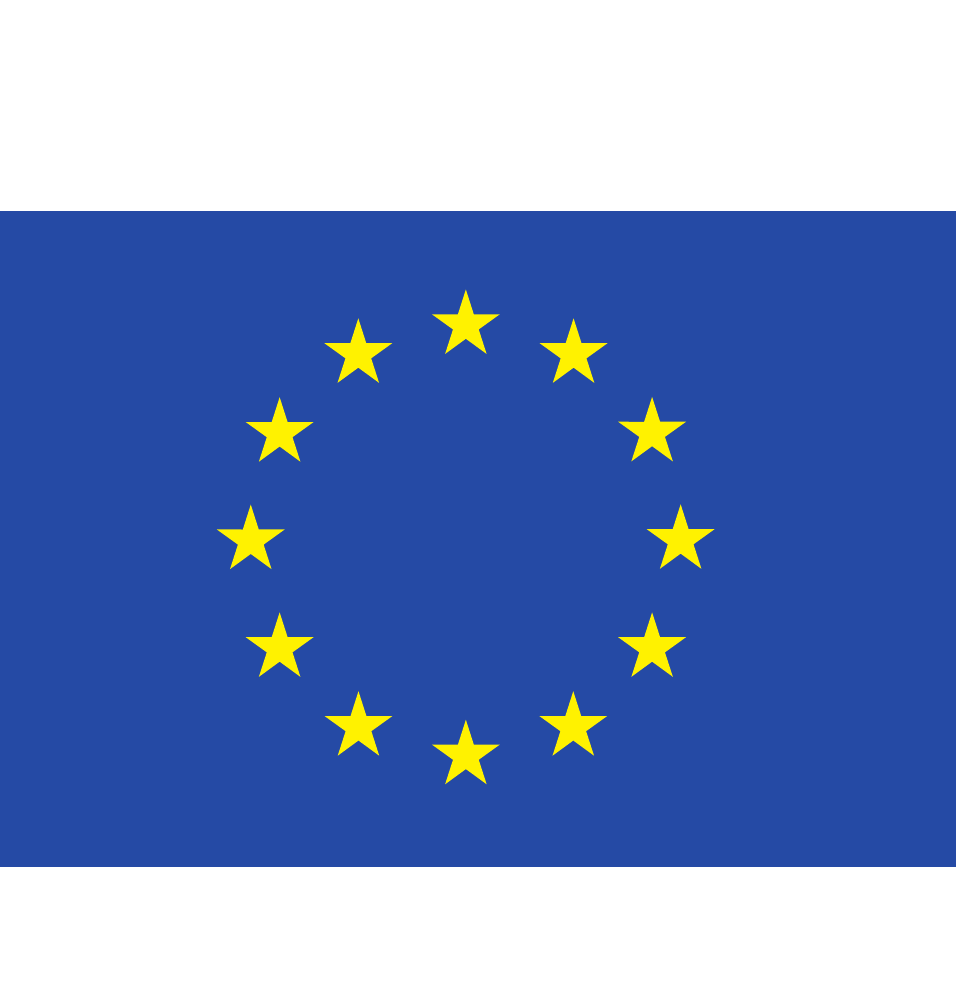} \end{minipage}  \hspace{-2cm} \begin{minipage}[l][1cm]{0.7\textwidth}
      This project has received funding from the European Union's Horizon 2020 research and innovation programme under the Marie Sk\l{}odowska-Curie grant agreement No 734922.
\end{minipage}}


\begin{abstract}
A set $S$  of vertices of a graph $G$ is \emph{distinguishing} if 
the sets of neighbors in $S$ for every pair of vertices not in $S$ are distinct.
A  \emph{locating-dominating set} of $G$ is a dominating distinguishing set.
The \emph{location-domination number} of $G$, $\lambda(G)$, is the minimum cardinality of a locating-dominating set.
In this work we study relationships between $\lambda({G})$ and $\lambda (\overline{G})$ for bipartite graphs. 
The main result is the characterization of all connected bipartite graphs $G$ satisfying $\lambda (\overline{G})=\lambda({G})+1$.
To this aim, we define an edge-labeled graph $G^S$ associated with a distinguishing set $S$ that turns out to be very helpful.
\end{abstract}

\bigskip\noindent \textbf{Keywords:} domination; location; distinguishing set; locating domination; complement graph; bipartite graph.

\bigskip\noindent \textbf{AMS subject classification:} 05C12, 05C35, 05C69.


\newpage
\section{Introduction}

Let  $G=(V,E)$ be a simple, finite graph.
The \emph{neighborhood} of a vertex $u\in V$ is $N_G(u)=\{ v : uv\in E\}$. 
We write $N(u)$ or $d(v,w)$ if the graph G is clear from the context.
For any $S\subseteq V$, $N(S)=\cup_{u\in S} N(u)$.
A set $S\subseteq V$ is \emph{dominating} if $V=S\cup N(S)$ (see \cite{hahesl}).
For further notation and terminology, we refer the reader to \cite{chlezh11}.

A set $S\subseteq V$  is \emph{distinguishing} if  $N(u)\cap S\not= N(v)\cap S$  
for every pair of different vertices $u,v \in V \setminus S$. 
In general,  if $N(u)\cap S\not= N(v)\cap S$, we say that $S$  \emph{distinguishes} the pair $u$ and $v$.
A \emph{locating-dominating set}, \emph{LD-set} for short, is a distinguishing set that is also dominating. 
Observe that there is at most one vertex not dominated by a distinguishing set.
The \emph{location-domination number} of $G$,   denoted by $\lambda (G)$, is the  minimum cardinality of a locating-dominating set.
A locating-dominating set of cardinality $\lambda(G)$ is called an \emph{LD-code}~\cite{rasl84,slater88}.
Certainly,  every LD-set of a non-connected graph $G$ is  the  union of LD-sets of its connected components and the location-domination number is the sum of the location-domination number of its connected components.
Both, LD-codes and the location-domination parameter have been intensively studied during the last decade;
see \cite{bchl,bcmms07,cahemopepu12,clm11,fhls16,ours3,oursglobal,hola06}.
A complete and regularly updated list of papers on locating-dominating codes is to be found in \cite{lobstein}.

The \emph{complement} of $G$,  denoted by $\overline{G}$, has the same set of vertices of $G$ and two vertices are adjacent in $\overline{G}$ if and only if they are not adjacent in $G$. 
This work is devoted to approach the relationship between $\lambda (G)$ and $\lambda (\overline{G})$ for connected bipartite graphs.

It follows immediately from the definitions that a set $S\subseteq V$ is distinguishing in $G$ 
if and only if it is distinguishing in $\overline{G}$.  
A straightforward consequence of this fact are the following results.

\vspace{.3cm}
\begin{proposition} [\cite{oursglobal}]\label{pro.domi}
	Let $S\subseteq V$ be an LD-set of a graph $G=(V,E)$.
	Then,  $S$ is an LD-set of $\overline{G}$ if and only if $S$ is a dominating set of $\overline{G}$;
\end{proposition}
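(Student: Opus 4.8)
The plan is to reduce everything to the observation, recorded in the paragraph just before the statement, that a set $S\subseteq V$ is distinguishing in $G$ if and only if it is distinguishing in $\overline{G}$. First I would recall that, by definition, an LD-set is a set that is simultaneously distinguishing and dominating; hence being an LD-set of $\overline{G}$ is equivalent to the conjunction of two separate conditions, namely being distinguishing in $\overline{G}$ and being dominating in $\overline{G}$. The point is that the first of these two conditions is automatic under the hypotheses, so the whole statement collapses to the second.

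For the forward implication, if $S$ is an LD-set of $\overline{G}$ then in particular $S$ is a dominating set of $\overline{G}$, which is exactly the right-hand side. For the converse, assume $S$ is an LD-set of $G$ and also a dominating set of $\overline{G}$. Since $S$ is distinguishing in $G$, the cited equivalence yields that $S$ is distinguishing in $\overline{G}$; together with the hypothesis that $S$ dominates $\overline{G}$, this shows that $S$ is a distinguishing dominating set of $\overline{G}$, i.e.\ an LD-set of $\overline{G}$. This closes the argument.

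If one wanted the proof to be self-contained rather than citing the preceding remark, the only ingredient needed is the neighborhood identity $N_{\overline{G}}(u)\cap S = S\setminus N_G(u)$ valid for every $u\notin S$, from which $N_G(u)\cap S\neq N_G(v)\cap S$ if and only if $N_{\overline{G}}(u)\cap S\neq N_{\overline{G}}(v)\cap S$ for distinct $u,v\notin S$, because complementation relative to the fixed set $S$ is a bijection on subsets of $S$. I expect no genuine obstacle here: the content is a one-line logical manipulation, and the only thing to be careful about is to keep the two notions apart, since ``distinguishing'' transfers freely between $G$ and $\overline{G}$ whereas ``dominating'' in general does not.
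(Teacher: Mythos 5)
Your proof is correct and follows exactly the route the paper intends: it invokes the fact (stated in the paragraph preceding the proposition) that a set is distinguishing in $G$ if and only if it is distinguishing in $\overline{G}$, so that only the domination condition remains to be checked. The self-contained justification via $N_{\overline{G}}(u)\cap S = S\setminus N_G(u)$ for $u\notin S$ is the standard argument behind that fact, so there is nothing to add.
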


\vspace{.3cm}
\begin{proposition} [\cite{ours3}]\label{pro.vertexdom}
	Let  $S\subseteq V$ be an LD-set of a graph $G=(V,E)$. Then, the following properties hold.
	\begin{itemize}
		\item[(a)]  There is at most one vertex $u\in V\setminus S$ such that $N(u)\cap S=S$, and in the case it exists, $S\cup \{ u \}$ is an LD-set of $\overline{G}$.
		\item[(b)] $S$ is an LD-set of $\overline{G}$ if and only if there is no vertex in $V\setminus S$ such that $N(u)\cap S=S$.
	\end{itemize} 
\end{proposition}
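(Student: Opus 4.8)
The plan is to derive both statements from Proposition~\ref{pro.domi} together with the observation, recorded just before it, that a set is distinguishing in $G$ if and only if it is distinguishing in $\overline{G}$. The one translation to keep in mind throughout is that, writing $N=N_G$, the condition $N(u)\cap S=S$ says exactly that $u$ is adjacent in $G$ to every vertex of $S$, equivalently that $u$ has no neighbour in $S$ in $\overline{G}$.

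For the uniqueness part of (a): if $u,v\in V\setminus S$ both satisfied $N(u)\cap S=S$ and $N(v)\cap S=S$, then $N(u)\cap S=S=N(v)\cap S$, contradicting that $S$ distinguishes the pair $u,v$; so at most one such vertex exists. For the second part of (a), assume such a vertex $u$ exists and put $S'=S\cup\{u\}$. First I would check that $S'$ is an LD-set of $G$: it dominates $G$ because $S$ already does, and it is distinguishing because enlarging a distinguishing set preserves this property — for $w,z\in V\setminus S'$ one has $(N(w)\cap S')\cap S=N(w)\cap S\neq N(z)\cap S=(N(z)\cap S')\cap S$, hence $N(w)\cap S'\neq N(z)\cap S'$. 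Applying Proposition~\ref{pro.domi} to $S'$, it then remains only to show that $S'$ dominates $\overline{G}$: the vertex $u$ is dominated since $u\in S'$, and if some $w\in V\setminus S'$ were undominated by $S'$ in $\overline{G}$, then $w$ would be adjacent in $G$ to all of $S'$, so $N(w)\cap S=S=N(u)\cap S$ with $w\neq u$, again contradicting that $S$ is distinguishing. Hence $S'$ dominates $\overline{G}$, and therefore $S'$ is an LD-set of $\overline{G}$.

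For (b), the crucial remark is that the hypothesis ``there is no vertex $u\in V\setminus S$ with $N(u)\cap S=S$'' is, by the translation above, literally the statement that every vertex of $V\setminus S$ has a neighbour in $S$ in $\overline{G}$, i.e.\ that $S$ is a dominating set of $\overline{G}$. Since $S$ is an LD-set of $G$, Proposition~\ref{pro.domi} gives that $S$ is an LD-set of $\overline{G}$ if and only if $S$ dominates $\overline{G}$, and by the remark this happens if and only if no vertex of $V\setminus S$ is adjacent in $G$ to all of $S$; that is exactly the equivalence asserted in (b).

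Since every implication reduces to unwinding the definitions and quoting Proposition~\ref{pro.domi}, I do not anticipate a real obstacle. The only steps needing a little care are in part (a): checking that $S\cup\{u\}$ stays distinguishing, and — the genuinely substantive point — checking that $S\cup\{u\}$ dominates $\overline{G}$, which is where the distinguishing property of $S$ is invoked a second time to exclude a rival vertex adjacent to all of $S$.
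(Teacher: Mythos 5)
Your proof is correct. The paper itself gives no proof of this proposition --- it is quoted from an earlier work and presented as a ``straightforward consequence'' of the fact that distinguishing sets are preserved under complementation --- and your argument (uniqueness from the distinguishing property, then reduction of both domination checks to Proposition~\ref{pro.domi} via the translation $N(u)\cap S=S$ $\Leftrightarrow$ $u$ has no $\overline{G}$-neighbour in $S$) is exactly the intended derivation, with the one genuinely non-trivial step (that $S\cup\{u\}$ dominates $\overline{G}$) handled correctly.
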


\vspace{.3cm}
\begin{theorem} [\cite{ours3}]\label{cor.difuno}
	For every graph $G$, $|\lambda (G) -\lambda (\overline{G})|\le 1$.
\end{theorem}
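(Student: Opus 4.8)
The plan is to prove the two inequalities $\lambda(\overline{G}) \le \lambda(G)+1$ and $\lambda(G) \le \lambda(\overline{G})+1$ separately; by the symmetry $\overline{\overline{G}} = G$ it suffices to establish the first one. So let $S$ be an LD-code of $G$, so that $|S| = \lambda(G)$. The key structural fact already recorded in the excerpt is that $S$ is a distinguishing set of $G$ if and only if it is a distinguishing set of $\overline{G}$; hence $S$ is automatically distinguishing in $\overline{G}$, and the only thing that can go wrong is that $S$ fails to dominate $\overline{G}$.

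Next I would analyze exactly how domination can fail in $\overline{G}$. A vertex $u \in V \setminus S$ is not dominated by $S$ in $\overline{G}$ precisely when $u$ has no non-neighbor in $S$ in the graph $G$, i.e. when $N_G(u) \cap S = S$. By Proposition \ref{pro.vertexdom}(a) there is at most one such vertex $u$ — this is the crucial point, and it is essentially a restatement of the observation that a distinguishing set leaves at most one vertex undominated (applied in $\overline{G}$, where the sets of neighbors in $S$ are the complements within $S$ of those in $G$). If no such vertex exists, then $S$ already dominates $\overline{G}$, so $S$ is an LD-set of $\overline{G}$ by Proposition \ref{pro.domi}, giving $\lambda(\overline{G}) \le |S| = \lambda(G) \le \lambda(G)+1$. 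If the (unique) vertex $u$ does exist, then by Proposition \ref{pro.vertexdom}(a) the set $S \cup \{u\}$ is an LD-set of $\overline{G}$, so $\lambda(\overline{G}) \le |S| + 1 = \lambda(G) + 1$. Either way the desired bound holds.

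Finally, applying the same argument with $G$ replaced by $\overline{G}$ yields $\lambda(G) = \lambda(\overline{\overline{G}}) \le \lambda(\overline{G}) + 1$, and combining the two inequalities gives $|\lambda(G) - \lambda(\overline{G})| \le 1$.

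The argument is short and the main "obstacle" is really just bookkeeping: one must be careful that the at-most-one-undominated-vertex phenomenon is being invoked in the right graph, and that adding the offending vertex $u$ to $S$ does not destroy the distinguishing property in $\overline{G}$ — but both of these are exactly what Propositions \ref{pro.domi} and \ref{pro.vertexdom} have been set up to provide, so in practice there is little left to do beyond citing them. Note also that this theorem does not use bipartiteness at all; it holds for every graph, and bipartiteness will only enter in the finer analysis of when equality $\lambda(\overline{G}) = \lambda(G)+1$ occurs.
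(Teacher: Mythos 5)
Your proof is correct and is essentially the argument the paper intends: the theorem is quoted from \cite{ours3}, and the two propositions stated just before it (Propositions \ref{pro.domi} and \ref{pro.vertexdom}) are exactly the ingredients you invoke, so taking an LD-code $S$ of $G$, noting it remains distinguishing in $\overline{G}$, and adding the unique possibly-undominated vertex when necessary gives $\lambda(\overline{G})\le\lambda(G)+1$, with the reverse inequality following by the symmetry $\overline{\overline{G}}=G$. No gaps.
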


According to the preceding inequality, $\lambda (\overline{G})\in\{\lambda (G)-1,\lambda (G),\lambda (G)+1\}$  for every graph $G$, all cases being  feasible for some connected graph $G$.
We intend to determine graphs such that $\lambda (\overline{G})>\lambda (G)$, that is, we want to solve the equation $\lambda (\overline{G})= \lambda (G) +1$.
This problem was completely solved in \cite{oursglobal} for the family of block-cactus.

In this work, we carry out a similar study for bipartite graphs. 
For this purpose, we first introduce in  Section~\ref{sec:associatedGraph} the graph associated with a distinguishing set. 
This graph turns out to be very helpful to derive  some properties related to LD-sets and the location-domination number of $G$, and will be used to get the main results in Section~\ref{sec:bipartite}.

In Table \ref{tab.valors},  the location-domination number of some families of bipartite graphs are displayed, along with the location-domination number of its complement graphs.
Concretely, we consider the path $P_n$ of order  $n\ge4$; the cycle $C_n$ of (even) order  $n\ge4$;  
the star $K_{1,n-1}$ of order  $n\ge4$, obtained by joining a new vertex to $n-1$ isolated vertices; 
the complete bipartite graph $K_{r,n-r}$ of order  $n\ge4$, with $2\le r\le n-r$ and stable sets of order $r$ and $n-r$, respectively; 
and finally, the bi-star $K_2(r,s)$ of order $n\ge6$ with  $3\le r\le s=n-r$,  obtained by joining the central vertices of two stars $K_{1,r-1}$ and $K_{1,s-1}$ respectively.

\vspace{.2cm}
\begin{proposition} [\cite{oursglobal}]\label{donosti}
	Let   $G$ be a graph of order $n\ge 4$.
	If $G$ is a graph belonging to one of the following classes: $P_n,C_n,K_{1,n-1},K_{r,n-r}, K_2(r,s)$,
	then the values of $\lambda (G)$ and $\lambda (\overline{G})$ are known and they are displayed in Table \ref{tab.valors}.
\end{proposition}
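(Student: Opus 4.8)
The plan is to establish the entries of Table~\ref{tab.valors} family by family, computing $\lambda(G)$ and $\lambda(\overline{G})$ separately in each case.

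For $\lambda(G)$ the families $P_n$ and $C_n$ are classical, so I would just quote the known formulas for the location-domination number of a path and of a cycle. For the other three families a single observation suffices: each of $K_{1,n-1}$, $K_{r,n-r}$ and $K_2(r,s)$ carries a family of stable sets --- the two sides of the bipartition of $K_{r,n-r}$, the leaf set of $K_{1,n-1}$, and the two leaf sets of $K_2(r,s)$ --- with the property that two vertices of the same stable set, both outside $S$, have the same neighbourhood in $S$; hence every such stable set $T$ satisfies $|T\setminus S|\le 1$. This already forces $|S|\ge n-2$ for $K_{r,n-r}$, and $|S|\ge n-1$ for $K_{1,n-1}$ (the leaf possibly left out of $S$ must still be dominated, which puts the center in $S$); for $K_2(r,s)$ the stable-set count gives only $|S|\ge n-4$, but it improves to $|S|\ge n-2$ once one adds the domination constraint (a center not in $S$ forces all of its leaves into $S$, and one cannot exclude both centers together with any leaf). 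Since matching locating-dominating sets are immediate, this yields $\lambda(K_{1,n-1})=n-1$ and $\lambda(K_{r,n-r})=\lambda(K_2(r,s))=n-2$.

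For $\lambda(\overline{G})$ the star and the complete bipartite graph are easy, because $\overline{K_{1,n-1}}=K_{n-1}\cup K_1$ and $\overline{K_{r,n-r}}=K_r\cup K_{n-r}$; using that $\lambda$ is additive over connected components, that $\lambda(K_m)=m-1$ for $m\ge 2$, and that $\lambda(K_1)=1$, one obtains $\lambda(\overline{K_{1,n-1}})=n-1$ and $\lambda(\overline{K_{r,n-r}})=n-2$. The complement of the bi-star is a clique $K_{n-2}$ on the $n-2$ leaves together with two extra vertices $c_1,c_2$, where $c_i$ is adjacent to exactly the leaves of the star not centered at $c_i$; for this graph I would exhibit the explicit locating-dominating set obtained by deleting $c_1$, $c_2$ and one leaf (of size $n-3$), and then prove optimality through the same ``equal trace'' principle: at most one leaf from each side can be outside $S$, and if $c_1$, $c_2$ and one leaf from each side are all deleted then those two deleted leaves receive the same trace; hence no set of size $n-4$ is locating-dominating and $\lambda(\overline{K_2(r,s)})=n-3$.

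Finally, $\overline{P_n}$ and $\overline{C_n}$ are where the real work lies, and the step I expect to be the main obstacle. Since $\Delta(P_n)=\Delta(C_n)=2$, no distinguishing set $S$ with $|S|\ge 3$ has a vertex of $V\setminus S$ adjacent to all of $S$; hence, by Proposition~\ref{pro.domi}, every LD-code of $P_n$ (respectively $C_n$) of size at least $3$ is also an LD-set of the complement, and combined with Theorem~\ref{cor.difuno} this sandwiches $\lambda(\overline{P_n})$ and $\lambda(\overline{C_n})$ between $\lambda(G)-1$ and $\lambda(G)$. Deciding which of the two values occurs is the delicate point: for $|S|=k\ge 3$, $S$ is an LD-set of $\overline{G}$ exactly when the $n-k$ vertices of $V\setminus S$ receive pairwise distinct traces of size at most $2$ in $G$, and the number of $G$-edges joining $S$ to $V\setminus S$, namely $\sum_{u\in V\setminus S}|N_G(u)\cap S|$, is governed by the number of maximal runs of $S$ along the path or cycle; comparing this quantity with the smallest possible total size of $n-k$ distinct subsets of size at most $2$ of a $k$-set, for $k=\lambda(G)$ and for $k=\lambda(G)-1$, produces a restriction on $n$ modulo $5$ which, together with explicit extremal constructions in the complement and a direct check of the finitely many small values of $n$, fixes every remaining table entry. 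The other three families are, by comparison, routine.
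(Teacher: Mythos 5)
The paper does not actually prove this proposition: it is imported verbatim from \cite{oursglobal}, so there is no internal argument to compare against, and any self-contained derivation like yours is by definition a different route. Your treatment of the three ``dense'' families is complete and correct: the twin/trace counting gives the lower bounds $n-1$, $n-2$, $n-2$ for $K_{1,n-1}$, $K_{r,n-r}$, $K_2(r,s)$ (with the domination refinement for the bi-star handled properly), the matching sets are easy, and the complements of the first two reduce to disjoint unions of cliques via additivity of $\lambda$ over components; your explicit $(n-3)$-set and the ``equal trace'' lower bound for $\overline{K_2(r,s)}$ are also sound. Quoting the classical values $\lambda(P_n)=\lambda(C_n)=\lceil 2n/5\rceil$ is legitimate.

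The genuine gap is exactly where you flag it: the entries $\lambda(\overline{P_n})$ and $\lambda(\overline{C_n})$. Your reduction is correct --- since $\Delta\le 2$, any LD-code of size at least $3$ is dominating in the complement, so Proposition~\ref{pro.domi} and Theorem~\ref{cor.difuno} pin $\lambda(\overline{G})$ to $\{\lambda(G)-1,\lambda(G)\}$ for $n\ge 6$ --- but the decision between the two values is asserted, not proved, and it is precisely this decision that produces the $\lceil(2n-2)/5\rceil$ entry. To close it you need two things. First, the lower bound $\lambda(\overline{G})\ge\lceil(2n-2)/5\rceil$: if $|S|=k$, every vertex off $S$ gets a trace in $G$ of size at most $2$, all traces are distinct with at most one empty, there are at most $k$ singleton traces, and the total trace size is at most $2k$ (at most two edges of the path or cycle leave each vertex of $S$); combining $a+b\ge n-k-1$, $a\le k$ and $a+2b\le 2k$ gives $n-k-1\le k+\frac{2k-a}{2}\le\frac{3k}{2}$, hence $k\ge\frac{2(n-1)}{5}$. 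Second, matching constructions of size $\lceil(2n-2)/5\rceil$ in the complement for $n\equiv 1,3\pmod 5$, $n\ge 7$ (for the other residues the bound coincides with $\lceil 2n/5\rceil$ and the LD-code of $G$ already achieves it), plus the separate verification that $n=6$ is an exception (with $|S|=2$ all four traces must occur, so some vertex has trace equal to $S$ and is undominated in the complement). None of this is hard, but as written it is a plan rather than a proof, so the table's path and cycle rows are not yet established by your argument.
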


\vspace{.7cm}{\large
\begin{table}[hbt]
	\begin{center}
		\begin{tabular}{c||cc|cc}
			{ $G$} & { $P_n$} & { $P_n$} &  { $C_n$} &   { $C_n$}  \\
			\hline
			{ $n$} &  $4\le n\le 6$ & { $n\ge 7$} & $4\le n\le 6$  & { $n\ge 7$} \\
			\hline			
			{ $\lambda (G)$} & { $\lceil \frac {2n}5 \rceil$} & { $\lceil \frac {2n}5 \rceil$}   &{ $\lceil \frac {2n}5 \rceil$}    & { $\lceil \frac {2n}5 \rceil$}  \\
			%
			{ $\lambda (\overline{G})$} & { $\lceil \frac {2n}5 \rceil$} & { $\lceil \frac {2n-2}5 \rceil$} & { $\lceil \frac {2n}5 \rceil$} & { $\lceil \frac {2n-2}5 \rceil$}  \\
			
		\end{tabular}
	\end{center}
	
	\begin{center}
		\begin{tabular}{c||ccc}
			{ $G$} & { $K_{1,n-1}$} &   { $K_{r,n-r}$} &   {$K_2${$(r,s)$}} \\
			\hline
			{ $n$} &   { $n\ge 4$} &  { $2\le r\le n-r$}&  { $3\le r\le s$} \\
			\hline			
			{ $\lambda (G)$} &   { $n-1$}  &  { $n-2$}  &  { $n-2$} \\
			%
			{ $\lambda (\overline{G})$} &   { $n-1$}  &  { $n-2$} &  { $n-3$} \\
			
		\end{tabular}
	\end{center}
	\caption{The values of  $\lambda (G)$ and $\lambda (\overline{G})$ for some families of bipartite graphs.}
	\label{tab.valors}
\end{table}
}

Notice that in all cases considered in Proposition~\ref{donosti}, we have $\lambda (\overline{G})\le \lambda (G)$.
Moreover, observe also that, for every pair of integers $(r,s)$ with  $3\le r\le s$, we have examples of bipartite graphs with stable sets of order $r$ and $s$ respectively, such that $\lambda (\overline{G})= \lambda (G)$ and such that $\lambda (\overline{G})= \lambda (G)-1$.

\newpage

\section{The graph  associated with a distinguishing set}\label{sec:associatedGraph}

Let $S $ be a distinguishing set of a graph $G$.
We introduce in this section a labeled graph associated with $S$ and study some general properties.
Since LD-sets are distinguishing sets that are also dominating, this graph allows us to derive  some properties related to LD-sets and the location-domination number of $G$.

\begin{defi}\label{def.Gomega} \rm
	Let $S$ be  a distinguishing set of cardinality $k$ of a graph $G=(V,E)$ of order $n$.
	The so-called \emph{S-associated graph}, denoted by $G^{S}$, is the edge-labeled graph defined as follows.
	\begin{itemize}
		\item[i)] $V(G^{S})=V\setminus S$;
		\item[ii)]  If $x,y\in V(G^{S})$, then $xy\in E(G^{ S})$ if and only if the sets of neighbors of $x$ and $y$ in $S$ differ in exactly one vertex $u(x,y)\in S$;
		\item[iii)]  The label $\ell (xy)$ of edge $xy\in E(G^{ S})$ is the only vertex $u(x,y)\in S$ described in the preceding item. 
	\end{itemize}
\end{defi}

\begin{figure}[!hbt]
	\begin{center}
		\includegraphics[width=0.35\textwidth]{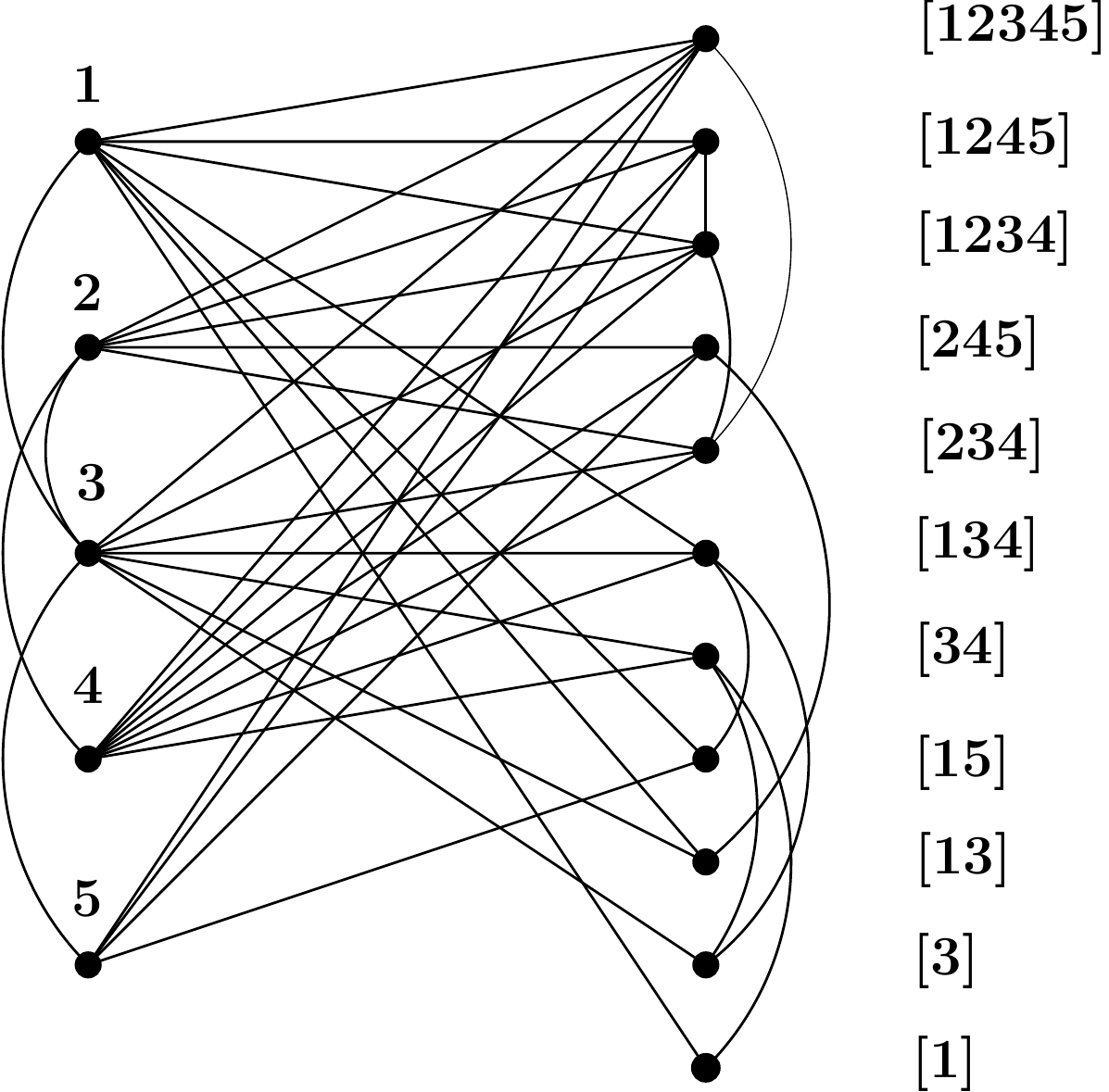}\hspace{2cm} \includegraphics[width=0.3\textwidth]{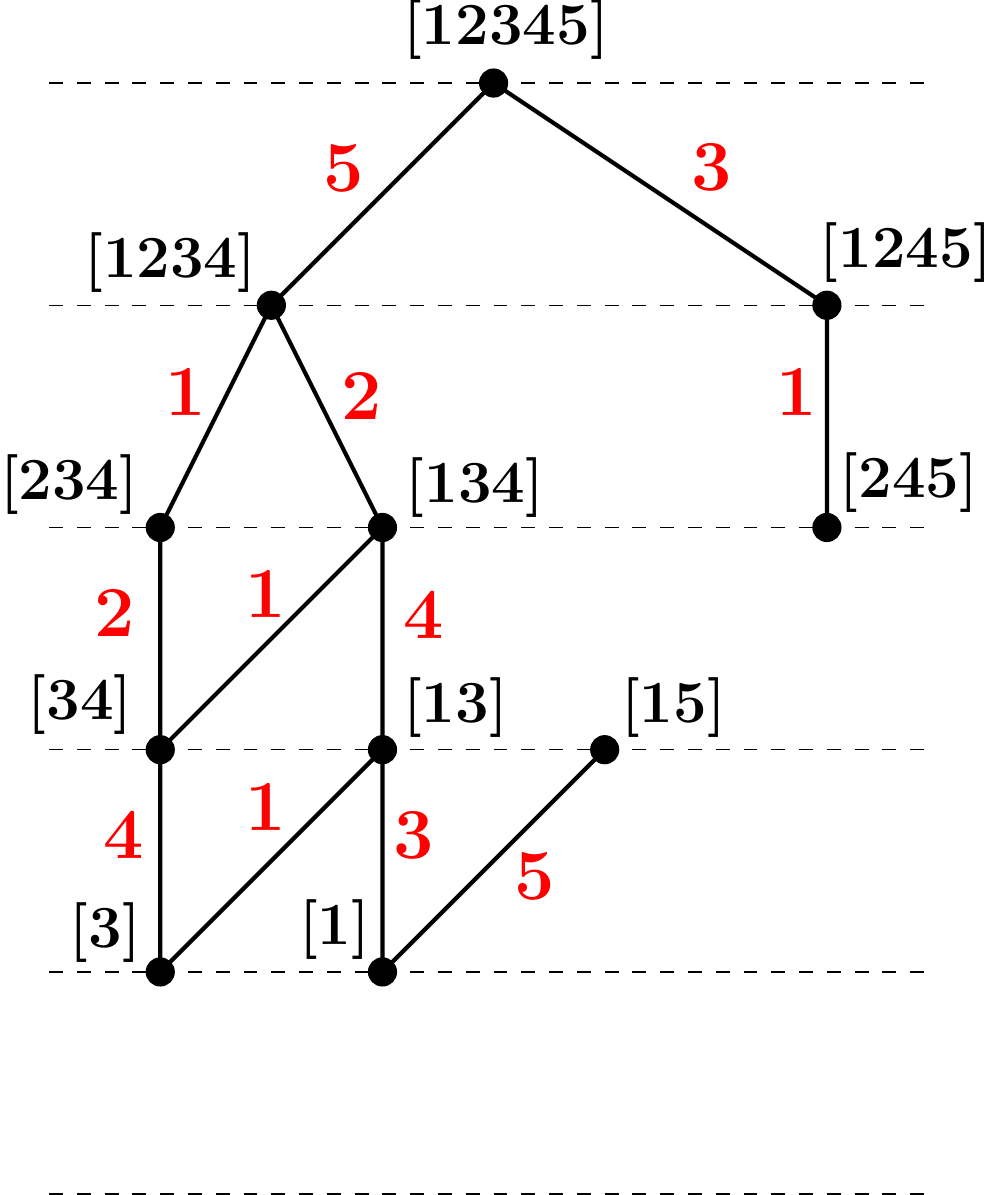}
	\end{center}
	\caption{A graph $G$ (left) and the graph $G^{S}$ associated with the distinguishing set $S=\{ 1,2,3,4,5 \}$ (right). The neighbors in $S$ of each vertex are those enclosed in brackets.}
	\label{fig:exempleGestrella}
\end{figure}

Notice that if $xy \in E(G^{S})$, $\ell (xy)=u\in S$ and $|N(x)\cap S|>|N(y)\cap S|$, then $N(x)\cap S=(N(y)\cap S) \cup \{u\}$.
Therefore, we can represent the graph $G^{S}$ with the vertices lying on $|S |+1=k+1$ levels, from bottom (level $0$) to top (level $k$), in such a way that vertices with exactly $j$ neighbors in $S$ are at level $j$. 
For any $j\in \{ 0,1,\dots , k \}$ there are at most $\binom {k} {j}$ vertices at level $j$.
So, there is at most one vertex at level $k$ and, if it is so, this vertex is adjacent to all vertices of $S$. 
There is at most one vertex at level $0$ and, if it is so, this vertex has no neighbors in $S$.
Notice that $S$ is an LD-set if and only if there is no vertex at level $0$.
The vertices at level 1 are those with exactly one neighbor in $S$. 
See Figure \ref{fig:exempleGestrella} for an example of an LD-set-associated graph.

Next, we state some basic  properties of the graph associated with a distinguishing set that will be used later.

\begin{proposition}\label{pro:basic}
	Let $S$ be a distinguishing set of $G=(V,E)$, $x,y\in V\setminus S$ and $u\in S$. Then,
	
	\begin{enumerate}[{\rm (1)}]
		
		\item $S$ is a distinguishing set of $\overline{G}$.
		
		\item The associated graphs $G^S$ and $\overline{G}^S$ are equal.
		
		\item  The representation by levels of  $\overline{G}^S$ is obtained by reversing bottom-top the representation of  $G^S$.
		
		\item $xy \in E(G^{S})$ and  $\ell (xy)=u$ if and only if   $x$ and $y$ have the same neighborhood in $S \setminus \{ u \}$ and (thus) they are not distinguished by $S\setminus \{ u \}$.
		
		\item If $xy \in E(G^{S})$ and  $\ell (xy)=u$, then $S\setminus \{ u \}$ is not a distinguishing set.

	\end{enumerate}
	
\end{proposition}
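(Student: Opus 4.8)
The plan is to verify each of the five items essentially directly from the definitions, exploiting the basic observation that for $x \in V \setminus S$, the two sets $N_G(x) \cap S$ and $N_{\overline{G}}(x) \cap S = S \setminus N_G(x)$ are complementary within $S$. First, for item~(1): if $x, y \in V \setminus S$ are distinguished by $S$ in $G$, i.e.\ $N_G(x) \cap S \neq N_G(y) \cap S$, then taking complements within $S$ preserves the inequality, so $N_{\overline{G}}(x) \cap S \neq N_{\overline{G}}(y) \cap S$; hence $S$ is distinguishing in $\overline{G}$. (This is already noted in the introduction, so one may simply cite that remark.) For item~(2), the vertex set of both $G^S$ and $\overline{G}^S$ is $V \setminus S$; and $xy$ is an edge with label $u$ in $G^S$ iff $N_G(x) \cap S$ and $N_G(y) \cap S$ differ exactly in $u$, which by complementation within $S$ happens iff $N_{\overline{G}}(x)\cap S$ and $N_{\overline{G}}(y)\cap S$ differ exactly in $u$ — so the edge sets and the labelings coincide.

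For item~(3): a vertex $x$ at level $j$ in $G^S$ has $|N_G(x)\cap S| = j$, hence $|N_{\overline{G}}(x)\cap S| = |S| - j = k - j$, so it sits at level $k-j$ in $\overline{G}^S$; this is exactly the bottom-top reversal of the level representation, and by item~(2) the underlying labeled graph is unchanged. For item~(4): by definition $xy \in E(G^S)$ with $\ell(xy) = u$ means $N_G(x) \cap S$ and $N_G(y) \cap S$ differ in exactly the one element $u$; this is equivalent to saying that after deleting $u$ the two trace-neighborhoods agree, i.e.\ $N_G(x) \cap (S \setminus \{u\}) = N_G(y) \cap (S \setminus \{u\})$, which is precisely the statement that $S \setminus \{u\}$ does not distinguish the pair $x, y$. (One should be slightly careful to note that ``differ in exactly one vertex'' forces that vertex to belong to exactly one of the two sets, so removing it from $S$ makes the traces equal; conversely equal traces on $S\setminus\{u\}$ together with $xy$ being an edge of $G^S$ forces the symmetric difference to be $\{u\}$.) Finally, item~(5) is an immediate consequence of item~(4): if $xy \in E(G^S)$ with $\ell(xy)=u$, then $x$ and $y$ are two distinct vertices of $V \setminus (S \setminus \{u\})$ not distinguished by $S \setminus \{u\}$, so $S\setminus\{u\}$ is not a distinguishing set.

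None of the steps presents a real obstacle; the only point requiring a little care is the logical bookkeeping in item~(4) — making sure the equivalence ``symmetric difference equals $\{u\}$'' $\Longleftrightarrow$ ``traces on $S\setminus\{u\}$ coincide'' is argued in both directions, since in the $\Leftarrow$ direction one needs the hypothesis $xy \in E(G^S)$ (equivalently $N_G(x)\cap S \neq N_G(y)\cap S$) to rule out the case where the traces on all of $S$ already coincide. Everything else is a routine unwinding of Definition~\ref{def.Gomega} and the complementation identity $N_{\overline{G}}(x) \cap S = S \setminus (N_G(x)\cap S)$.
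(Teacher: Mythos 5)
Your proof is correct and matches the paper's (implicit) treatment: the paper gives no proof of this proposition at all, regarding every item as an immediate unwinding of Definition~\ref{def.Gomega} together with the complementation identity $N_{\overline{G}}(x)\cap S = S\setminus (N_G(x)\cap S)$ already noted in the introduction, which is exactly what you do. One small correction to your bookkeeping in item~(4): in the $\Leftarrow$ direction you cannot invoke ``$xy\in E(G^S)$'' to rule out $N_G(x)\cap S = N_G(y)\cap S$ (that would be circular, since the edge is what you are trying to establish); the fact you actually need is that $S$ is distinguishing and $x\neq y$ lie in $V\setminus S$, which already guarantees $N_G(x)\cap S\neq N_G(y)\cap S$, so the symmetric difference contained in $\{u\}$ must equal $\{u\}$.
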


\begin{figure}[!hbt]
	\begin{center}
		\includegraphics[height=3.8cm]{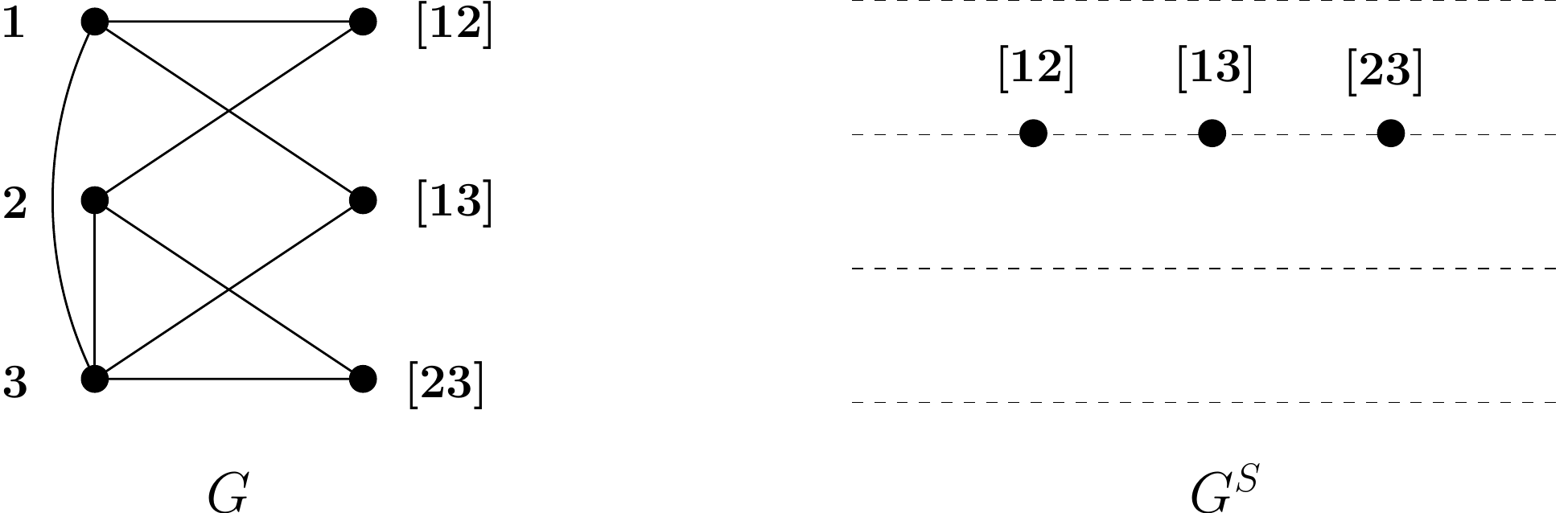}
		\caption{$S=\{1,2,3\}$ is distinguishing, $S'=\{1,2\}$ is not distinguishing  and $G^S$ has no edges.}
		\label{fig:contra}
	\end{center}
\end{figure}

The converse of Proposition~\ref{pro:basic} (5) is not necessarily true. 
For example, consider the graph $G$ of order 6 displayed in Figure \ref{fig:contra}. 
By construction, $S=\{1,2,3\}$ is a distinguishing set. 
However, $S'=S\setminus \{ 3\}=\{1,2\}$ is not a distinguishing set, because $N(3)\cap S'=N([12])\cap S'=\{ 1,2\}$, and the $S$-associated graph $G^{S}$ has no edge with label $3$ (in fact,  $G^{S}$ has no edges since the neighborhoods in $S$ of all vertices not in $S$ have the same size).

As a straight consequence of Proposition~\ref{pro:basic} (5), the following result is derived.

\begin{corollary}\label{pro:suprimirVarios}
	Let $S$ be a distinguishing set of $G$ and let $S'\subseteq S$.  Consider the subgraph $H_{S'}$ of $G^S$ induced by the edges with a label from $S'$. 
	Then, all the vertices belonging to the same connected component in $H_{S'}$ 
	have the same neighborhood  in $S\setminus S'$, concretely, it is the neighborhood in $S$ of a vertex lying on the lowest level.
\end{corollary}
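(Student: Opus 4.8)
The plan is to reduce the statement to the one-edge case recorded in Proposition~\ref{pro:basic}~(4) and then propagate that conclusion along paths inside a fixed connected component of $H_{S'}$.

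First I would fix a connected component $C$ of $H_{S'}$ and take two arbitrary vertices $x,y\in C$. Since $C$ is connected, there is an $x$--$y$ path $x=v_0,v_1,\dots,v_t=y$ whose edges all lie in $H_{S'}$, hence all of whose labels $\ell(v_{i-1}v_i)$ belong to $S'$. By Proposition~\ref{pro:basic}~(4), for every $i$ the vertices $v_{i-1}$ and $v_i$ have the same neighborhood in $S\setminus\{\ell(v_{i-1}v_i)\}$; since $\ell(v_{i-1}v_i)\in S'$, in particular they have the same neighborhood in $S\setminus S'$. Concatenating these equalities along the path yields $N(x)\cap(S\setminus S')=N(y)\cap(S\setminus S')$. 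As $x$ and $y$ were arbitrary, there is a set $A_{C}\subseteq S\setminus S'$ with $N(v)\cap(S\setminus S')=A_{C}$ for every $v\in C$. Formally this is a short induction on the path length $t$: the base case $t=0$ is trivial, and the inductive step is exactly the single-edge observation above.

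To finish I would identify $A_{C}$ as the $S$-neighborhood of a lowest-level vertex of $C$. Pick $z\in C$ minimizing $|N(z)\cap S|$ over the vertices of $C$; since $N(v)\cap(S\setminus S')=A_{C}$ is constant on $C$, such a $z$ also minimizes $|N(z)\cap S'|$, and it suffices to show this minimum is $0$, i.e.\ $N(z)\cap S'=\emptyset$ --- then $N(z)\cap S=N(z)\cap(S\setminus S')=A_{C}$, as claimed. For this I would argue from minimality: if some $u\in N(z)\cap S'$ existed, then the vertex of $V\setminus S$ whose neighborhood in $S$ is $(N(z)\cap S)\setminus\{u\}$ would be joined to $z$ in $G^{S}$ by an edge of label $u\in S'$, so it would lie in $C$ one level below $z$, contradicting the choice of $z$.

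The step I expect to be the main obstacle is precisely this last deletion argument, which tacitly assumes that the ``one level lower'' neighborhood pattern $(N(z)\cap S)\setminus\{u\}$ is actually realized by some vertex of $V\setminus S$; equivalently, that a connected component of $H_{S'}$ cannot sit strictly above level $|A_{C}|$. The clean way to justify this is to view $C$ as a connected subgraph of the graph whose vertices are the patterns $A_{C}\cup B$ with $B\subseteq S'$ and whose edges flip a single element of $S'$ (a restatement of Proposition~\ref{pro:basic}~(4) and~(5)), and to check that from any non-minimal vertex one can delete elements of $S'$ one at a time without leaving the set of patterns realized in $C$, so that the minimal vertex $z$ must carry the empty pattern on $S'$. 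If this needs more than plain connectivity of $C$, the safe fallback --- sufficient for the applications in Section~\ref{sec:bipartite} --- is to state the conclusion with $A_{C}$ described merely as $N(z)\cap(S\setminus S')$ for an arbitrary lowest-level $z\in C$.
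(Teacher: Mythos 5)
Your main argument is correct and is exactly the derivation the paper intends: the corollary is stated there as a ``straight consequence'' of Proposition~\ref{pro:basic}, and the intended reasoning is precisely your concatenation of the single-edge observation (Proposition~\ref{pro:basic}~(4)) along a path inside the component, which gives a constant neighborhood $A_C$ in $S\setminus S'$ on each component. That part needs nothing more.

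Your suspicion about the ``concretely'' clause is also well founded, and you should trust it: the deletion step does fail, because the pattern $(N(z)\cap S)\setminus\{u\}$ need not be realized by any vertex of $V\setminus S$, so a component of $H_{S'}$ can indeed sit strictly above level $|A_C|$. In fact the paper's own illustration (Figure~\ref{fig:components}, with $S'=\{1,2\}$) already exhibits this: the component $\{[1245],[245]\}$ has lowest vertex $[245]$, whose neighborhood in $S$ is $\{2,4,5\}$, while the common neighborhood in $S\setminus S'$ is $\{4,5\}$. So the clause is false under the literal reading ``the neighborhood in $S$ of a lowest-level vertex of the component,'' and your proposed fallback --- describing the common value as $N(z)\cap(S\setminus S')$ for a lowest-level vertex $z$ of the component (equivalently, for any vertex of the component) --- is the correct formulation. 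This weaker version is all that is used later: the proof of Theorem~\ref{pro.condnecsuf} invokes only the statement that vertices of a common component of $H_{U\setminus S}$ share their neighborhood in $U\cap S$. In short, keep your first paragraph as the proof, and replace the attempted justification of the ``concretely'' clause by the restated (weaker) conclusion rather than trying to repair the deletion argument.
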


For example, consider the graph shown in Figure~\ref{fig:exempleGestrella}.
If $S'=\{1,2\}$, then vertices of the same connected component in $H_{S'}$ have the same neighborhood in $S\setminus S'$. Concretely, the neighborhood of vertices [1234], [234], [134] and [34] in $S\setminus \{1,2\}$ is $\{3,4\}$;
the neighborhood of vertices [13] and [3] in $S\setminus \{1,2\}$ is $\{3\}$; and 
the neighborhood of vertices [1245] and [245] in $S\setminus \{1,2\}$ is $\{4,5\}$
(see Figure~\ref{fig:components}).

\begin{figure}[!hbt]
	\begin{center}
		\includegraphics[height=4.5cm]{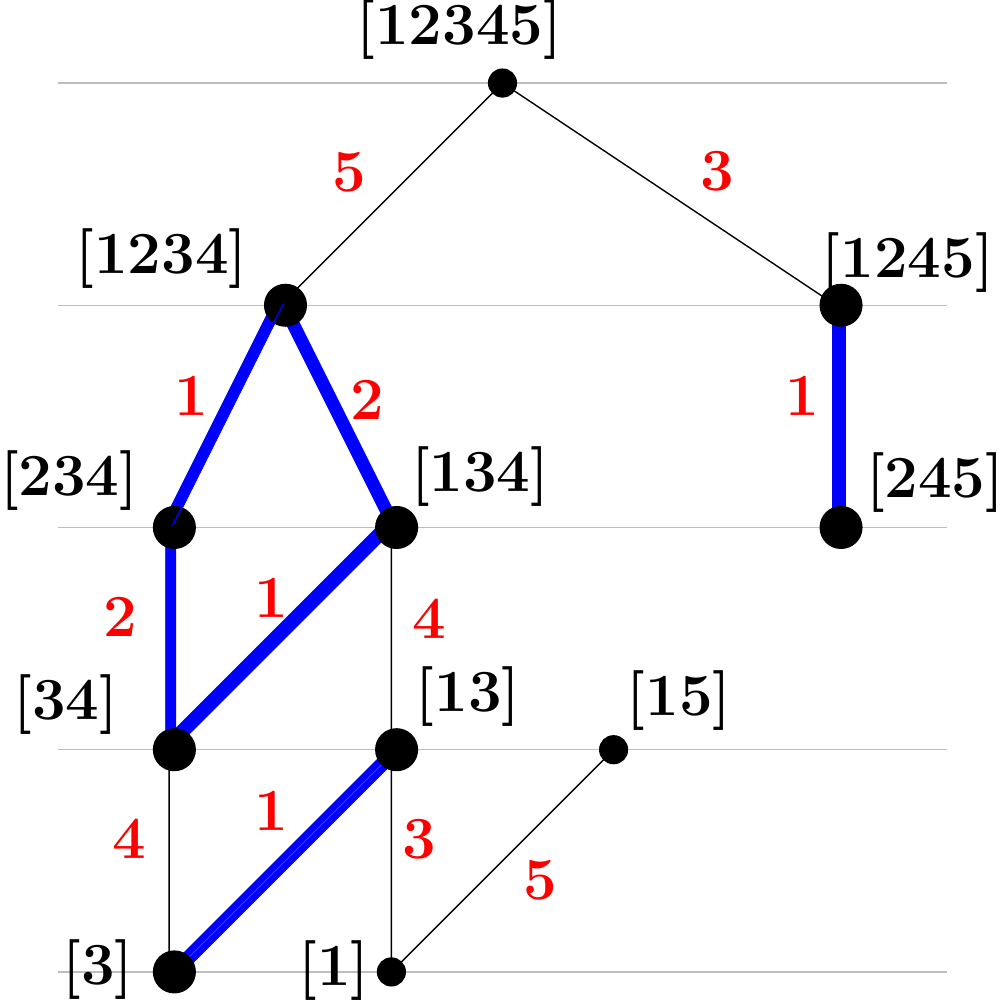}
		\caption{If $S'=\{1,2\}$, then $H_{S'}\cong C_4+2K_2$ has three components. Vertices of the same  component in $H_{S'}$ have the same neighborhood in $S\setminus S'$.}
		\label{fig:components}
	\end{center}
\end{figure}

\begin{proposition}\label{claims123}
	Let $S$ be  a distinguishing set of cardinality $k$ of a connected graph $G$ of order $n$.
	Let $G^{S}$ be its  associated graph. Then, the following conditions hold.
	\begin{enumerate}[{\rm (1)}]
		\item {$|V(G^{S})|=n-k$.}
		
		\item $G^{S}$ is bipartite.
		
		\item Incident edges of $G^S$ have different labels.
		
		\item Every cycle of $G^{S}$ contains an even number of edges labeled $v$, for all $v\in S$.
		
		\item Let $\rho$  be a walk with no repeated edges in $G^{S}$.
		If  $\rho$ contains an even number of edges labeled $v$ for every $v\in S$, then $\rho$  is a closed walk.
		
		\item If $\rho =x_ix_{i+1}\dots x_{i+h}$ is a path satisfying that vertex $x_{i+h}$ lies at level $i+h$, for any $h\in \{ 0,1,\dots ,h \}$, then
		\begin{enumerate}
			\item
			the edges of $\rho$ have different labels;
			\item for all  $j\in \{ i+1,i+2,\dots ,i+h \}$, $N(x_j)\cap S$ contains the vertex $\ell (x_kx_{k+1})$, for any $k\in \{ i,i+1,\dots , j-1\}$.
		\end{enumerate}

	\end{enumerate}
\end{proposition}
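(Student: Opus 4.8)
My plan is to isolate one elementary observation and read off the six items from it. The observation: if $xy\in E(G^S)$ with $\ell(xy)=v$, then $v$ lies in exactly one of $N(x)\cap S$ and $N(y)\cap S$, so \emph{traversing a $v$-labeled edge toggles whether $v$ is a neighbor of the current vertex}; in particular $\bigl||N(x)\cap S|-|N(y)\cap S|\bigr|=1$, i.e.\ every edge of $G^S$ joins two consecutive levels of the level representation. I would record this first. Then (1) is immediate, since $V(G^S)=V\setminus S$ has $n-k$ elements. For (2) I would color each vertex by the parity of its level; by the observation every edge joins vertices of opposite parity, so $G^S$ is bipartite. For (4) I would traverse a cycle $x_0x_1\cdots x_m=x_0$ once: each $v$-labeled edge flips the indicator of $v$, and since the walk returns to $x_0$ the number of such flips must be even. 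For (5) the same flip-count along a walk $x_0x_1\cdots x_m$ (the no-repeated-edges hypothesis merely ensures that the number of edges labeled $v$ coincides with the number of traversals of such edges) gives $N(x_0)\cap S=N(x_m)\cap S$ when every label occurs an even number of times; as $S$ is distinguishing and $x_0,x_m\notin S$, this forces $x_0=x_m$, so $\rho$ is closed.

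Item (3) is the one that needs more than the toggling observation. I would argue by contradiction: if edges $xy$ and $xz$ meet at $x$ and both carry label $u$, then by Proposition~\ref{pro:basic}(4) $x$ agrees with $y$ on $S\setminus\{u\}$ and $x$ agrees with $z$ on $S\setminus\{u\}$, hence $y$ and $z$ agree on $S\setminus\{u\}$; and since $u$ separates $x$ from each of $y$ and $z$, both $y$ and $z$ take at $u$ the value opposite to that of $x$, so they agree at $u$ as well. Then $N(y)\cap S=N(z)\cap S$ with $y\neq z$, contradicting that $S$ is distinguishing.

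For (6), which I read as ``$x_{i+j}$ lies at level $i+j$ for every $j\in\{0,1,\dots,h\}$'', I would first note that an edge $pq$ with $q$ one level above $p$ satisfies $N(p)\cap S\subset N(q)\cap S$ and $N(q)\cap S=(N(p)\cap S)\cup\{\ell(pq)\}$, since a size gap of $1$ together with a symmetric difference of size $1$ leaves no other possibility. Iterating along $\rho$ gives a strictly increasing chain $N(x_i)\cap S\subset N(x_{i+1})\cap S\subset\cdots\subset N(x_{i+h})\cap S$ in which each step adjoins the new vertex $\ell(x_{i+j}x_{i+j+1})$; distinctness of these adjoined vertices is exactly (a), and the fact that each $\ell(x_kx_{k+1})$ with $i\le k\le j-1$ already belongs to $N(x_{k+1})\cap S\subseteq N(x_j)\cap S$ is exactly (b).

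I expect no single step to be technically hard: the one thing to get right is the label-toggling viewpoint, after which (2), (4), (5) become parity arguments and (6) becomes the nestedness of the neighborhoods along a level-increasing path. The only item requiring outside input is (3), which is precisely where Proposition~\ref{pro:basic}(4) and the distinguishing property of $S$ enter. Accordingly I would present the toggling remark as a short preliminary and then handle the items in the order (1), (2), (3), (4), (5), (6).
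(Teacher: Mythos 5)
Your proposal is correct and follows essentially the same route as the paper: the label-toggling observation underlies the paper's parity arguments for (2), (4) and (5), your contradiction for (3) (two incident edges with the same label force two distinct vertices outside $S$ to have equal neighborhoods in $S$) is the paper's argument in a slightly different notation, and (6) is proved in both cases from the nesting $N(x_{j})\cap S=(N(x_{j-1})\cap S)\cup\{\ell(x_{j-1}x_{j})\}$ along a level-increasing path. No gaps.
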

\begin{proof}
	\begin{enumerate}[{\rm (1)}]
		\item It is a direct consequence  from the definition of $G^{S}$.
		
		\item Take  $V_1=\{ x\in V(G^{S}) : |N(x)\cap S |\textrm{ is odd}\}$ and  $V_2=\{ x\in V(G^{S}) : |N(x)\cap S|\textrm{ is even}\}$.
		Then, $V(G^{S})=V_1\cup V_2$ and $V_1\cap V_2 =\emptyset$.
		Since $||N(x)\cap S|-|N(y)\cap S||=1$ for any $xy\in E(G^{S})$, it is clear that the vertices $x,y$ are not in the same subset $V_i$, $i=1,2$.
		
		\item Suppose that edges $e_1=xy$  and $e_2=yz$ have the same label $l(e_1)=l(e_2)=v$.
		This means that  $N(x)\cap S$ and $N(y)\cap S$ differ only in vertex $v$, and  $N(y)\cap S$ and $N(z)\cap S$ differ only in vertex $v$.
		It is only possible if $N(x)\cap S=N(z)\cap S$, implying that $x=z$.
		
		\item Let $\rho$ be a cycle such that $E(\rho)=\{x_0x_1,x_1x_2,\ldots x_hx_0\}$.
		The set of neighbors in $S$ of two consecutive vertices differ exactly in one vertex.
		If we begin with $N(x_0)\cap S$, then each time we add (remove) the vertex of the label of the corresponding edge, we have to remove (add) it later in order to obtain finally the same neighborhood,  $N(x_0)\cap S$.
		Therefore, $\rho$ contains an even number of edges with label $v$.
		
		\item
		Consider the vertices $x_0,x_1,x_2,x_3,...,x_{2k}$ of  $\rho$.
		In this case, $N(x_{2k})\cap S$ is obtained from $N(x_0)\cap S$ by either adding or removing  the labels of all the edges of the walk.
		As every label appears an even number of times, for each element $v\in S$ we can match its appearances in pairs, and each pair means that we  add and remove (or remove and add) it from the neighborhood in $S$.
		Therefore, $N(x_{2k})\cap S=N(x_0)\cap S$, and hence $x_0=x_{2k}$.
		
		\item It straightly follows  from the fact that $N(x_{j})\cap S = (N(x_{j-1})\cap S)\cup \{ \ell (x_{j-1}x_{j})\}$, for any $j\in \{ i+1,\dots ,i+h\}$.
	\end{enumerate}
	\vspace{-.9cm}\end{proof}

In the study of distinguishing sets and LD-sets using its associated graph, a family of graphs is particularly useful, the \emph{cactus graph} family.
A \emph{block} of a graph is a maximal connected subgraph with no cut vertices.
A connected graph $G$ is a \emph{cactus} if all its blocks are either cycles or edges.
Cactus are characterized as those connected graphs with no edge shared by two cycles.

\begin{lemma}\label{lem:claim.subgraphcactus}
	Let $S$ be a distinguishing set of a graph $G$ and $\emptyset\not= S'\subseteq S$.
	Consider a subgraph $H$ of $G^S$ induced by a set of edges containing exactly two edges with label $u$, for each $u\in S'\subseteq S$.
	Then, all the connected components of $H$ are cactus.
\end{lemma}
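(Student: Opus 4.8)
The plan is to show that a connected graph $H$ satisfying the hypothesis cannot contain two distinct cycles sharing an edge, which by the quoted characterization of cactus graphs is exactly what we need. First I would recall from Proposition~\ref{claims123}(4) that every cycle of $G^S$ — and hence every cycle of the subgraph $H$ — contains an \emph{even} number of edges with label $v$, for each $v\in S$. Since $H$ has, by construction, exactly two edges with label $u$ for each $u\in S'$ and no edges with labels outside $S'$, any cycle in $H$ that uses an edge labeled $u$ must in fact use \emph{both} edges of $H$ labeled $u$ (it cannot use just one, by parity, and it cannot use more than two because only two exist). So each label $u\in S'$ is either entirely absent from a given cycle or appears on exactly both of its $u$-labeled edges.

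Next I would argue by contradiction: suppose two distinct cycles $C_1$ and $C_2$ of $H$ share a common edge $e$, with label $u_0$. Then $C_1$ contains both edges of $H$ labeled $u_0$, and likewise $C_2$ contains both edges of $H$ labeled $u_0$; in particular $e$ and the unique other $u_0$-labeled edge $e'$ both lie in $C_1\cap C_2$. I then look at the symmetric difference $C_1\triangle C_2$ (as edge sets), which is a nonempty even subgraph of $H$, hence contains a cycle $C_3$, and this cycle uses none of the edges of $C_1\cap C_2$ — in particular $C_3$ contains neither $e$ nor $e'$, so $C_3$ has \emph{no} edge labeled $u_0$. For the contradiction I want to produce a label that $C_3$ uses an \emph{odd} number of times, violating Proposition~\ref{claims123}(4). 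The key observation is that for every label $u\in S'$, the two $u$-labeled edges of $H$ contribute to $C_1\triangle C_2$ in a controlled way: each such edge is in $C_1\triangle C_2$ iff it lies in exactly one of $C_1,C_2$. Since $C_1$ and $C_2$ each use a given label either zero times or exactly twice (both copies), one checks that the number of $u$-labeled edges landing in $C_1\triangle C_2$ is $0$, $2$, or — and this is the case that matters — potentially an odd count when the two copies split differently between the cycles; but parity bookkeeping on "both copies of label $u$ in $C_i$ or neither" forces the count in the symmetric difference to be even for every $u\neq u_0$ and equal to $0$ for $u_0$. Hence every cycle $C_3\subseteq C_1\triangle C_2$ would have to...

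Actually, the cleaner route, which I would ultimately take, avoids this delicate split-analysis: observe that in $H$ the edges labeled $u$, for $u$ ranging over $S'$, together number exactly $2|S'|$, and Proposition~\ref{claims123}(3) says incident edges have different labels. Consider a connected component $H_0$ of $H$ and suppose, for contradiction, that two cycles of $H_0$ share an edge. Equivalently, some edge $e$ of $H_0$ lies in two distinct cycles, so $H_0-e$ still contains a cycle through both endpoints of $e$... rather, I would use that a connected graph fails to be a cactus iff it has a subgraph that is a "theta graph" (two vertices joined by three internally disjoint paths) or iff $|E(H_0)|\ge |V(H_0)|+1$ with the excess edges forming overlapping cycles. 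The cleanest: if $H_0$ is not a cactus it contains a theta subgraph $\Theta$ with three internally disjoint paths $P_1,P_2,P_3$ between vertices $a,b$; then $P_1\cup P_2$, $P_2\cup P_3$, $P_1\cup P_3$ are three cycles and, summing the three "label $v$ parities" from Proposition~\ref{claims123}(4) over these three cycles, each path $P_i$ is counted exactly twice, so the total count of label-$v$ edges in $P_1\cup P_2\cup P_3$ is even for each $v$; but this total is just the number of label-$v$ edges in $\Theta$, and I would then derive a contradiction with the "exactly two edges per label" hypothesis by a careful count showing some label must appear an odd number of times on one of the three cycles.

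\textbf{Main obstacle.} The genuinely delicate point is the parity/counting argument that turns "every cycle is even in each label" plus "each label used exactly twice in $H$" into "no two cycles overlap." The combinatorics of how the two copies of a given label distribute among several cycles (or along the three paths of a theta subgraph) is where care is needed; I expect the slick version to go through a theta subgraph and a summation of parities, concluding that the forced evenness is incompatible with having only two edges of each label available once those two edges are separated into different cycles. Everything else — bipartiteness, the structural characterization of cactus graphs, and the level representation — is either quoted directly or immediate.
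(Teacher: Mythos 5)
Your setup is correct and matches the paper's opening move: the observation that any cycle of $H$ using one edge with label $u$ must use both edges of $H$ with that label (Proposition~\ref{claims123}(4) together with the ``exactly two edges per label'' hypothesis) is exactly how the paper's proof begins. But neither of your two routes actually reaches a contradiction, and you essentially say so yourself. The symmetric-difference route stalls for the reason you half-identify: since each of $C_1$, $C_2$ contains either both or neither of the two $u$-labeled edges, every label occurs an even number of times ($0$ or $2$) in $C_1\triangle C_2$, which is perfectly consistent with Proposition~\ref{claims123}(4) and yields nothing. The theta route is salvageable, but the parity computation you perform there is vacuous: summing the label-$v$ counts over the three cycles gives $2\bigl(n_1(v)+n_2(v)+n_3(v)\bigr)$, which is even no matter what. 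Moreover, your stated target --- ``some label must appear an odd number of times on one of the three cycles'' --- is not something you can derive: Proposition~\ref{claims123}(4) already guarantees this never happens, so the contradiction has to come from elsewhere.

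What is missing is any use of Proposition~\ref{claims123}(5), the closed-walk criterion, which is the engine of the paper's proof and also the way to finish your theta argument. Writing $n_i(v)$ for the number of label-$v$ edges on $P_i$, part (4) applied to the three cycles gives that $n_i(v)+n_j(v)$ is even for all $i\ne j$, so $n_1(v),n_2(v),n_3(v)$ all have the same parity; they cannot all be odd, since $\Theta\subseteq H$ contains at most two edges with label $v$; hence all are even, for every $v$. Then each $P_i$ is a walk without repeated edges carrying an even number of edges of each label, so by part (5) it is a closed walk --- contradicting that $P_i$ joins the two distinct branch vertices $a\ne b$. (You would also need to justify that a connected graph with two distinct cycles sharing an edge contains a theta subgraph; this is standard but should be stated.) The paper instead argues directly on the two overlapping cycles, with a case analysis on how the two $u$-labeled edges sit inside them, constructing auxiliary cycles containing exactly one edge of some label (violating part (4)) or open paths with all labels even (violating part (5)). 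Your theta route, once completed as above, would be shorter and cleaner than that case analysis, but as written the decisive step is absent and is precisely the point you flag as the ``main obstacle.''
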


\begin{proof}
	We  prove that there is no edge lying on two different cycles of $H$.
	Suppose, on the contrary, that there is an edge $e_1$ contained in two different cycles $C_1$ and $C_2$ of $H$.
	Note that $C_1$ and $C_2$ are cycles of $G^S$, since $S'\subseteq S$.
	Hence, if the label of $e_1$ is $u\in S'\subseteq S$, then by Proposition \ref{claims123} both cycles $C_1$ and $C_2$ contain the other edge $e_2$ of $H$ with label $u$.
	Suppose that $e_1=x_1y_1$ and $e_2=x_2y_2$ and assume without loss of generality that there exist $x_1-x_2$ and $y_1-y_2$ paths in $C_1$ not containing edges $e_1,e_2$. Let $P_1$ and $P_1'$ denote respectively those paths (see Figure \ref{fig.ac} a).
	
	We have two possibilities for $C_2$:
	(i) there are  $x_1-x_2$ and $y_1-y_2$ paths in $C_2$ not containing neither $e_1$ nor $e_2$. Let $P_2$ denote the $x_1-x_2$ path in $C_2$ in that case (see Figure \ref{fig.ac} b);
	(ii) there are  $x_1-y_2$ and $y_1-x_2$ paths in $C_2$ not containing neither $e_1$ nor $e_2$ (see Figure \ref{fig.ac} c).
	
	In  case (ii), the closed walk formed with the  path $P_1$, $e_1$ and the $y_1-x_2$ path in $C_2$ would contain a cycle with exactly one edge labeled with $u$, a contradiction (see Figure \ref{fig.ac} d).
	
	In case (i), at least one the following cases hold: either the $x_1-x_2$ paths in $C_1$ and in $C_2$, $P_1$ and $P_2$, are different, or the $y_1-y_2$ paths in $C_1$ and in $C_2$ are different (otherwise, $C_1=C_2$).

	\begin{figure}[!hbt]
		\begin{center}
			\includegraphics[height=6cm]{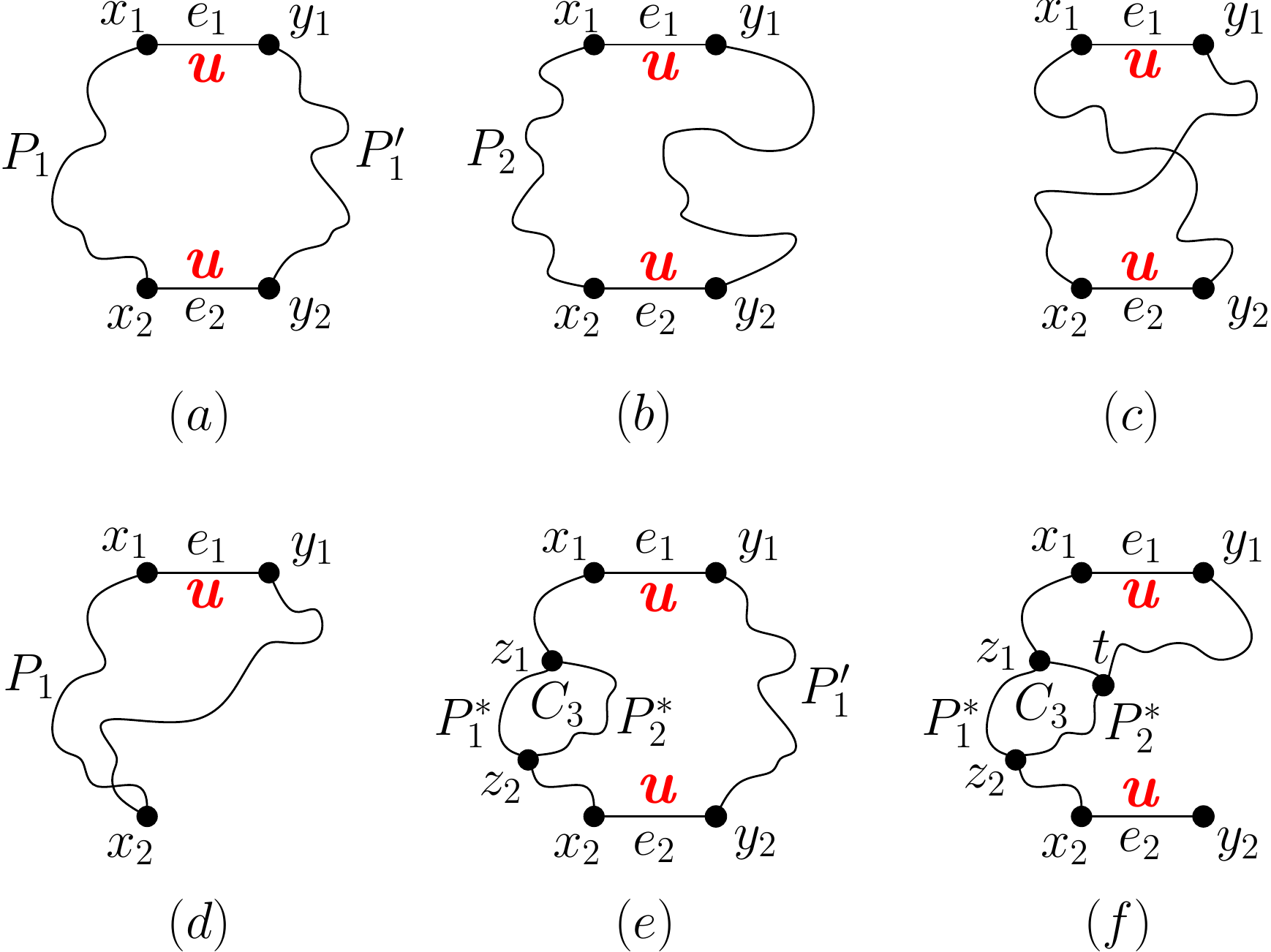}
			\caption{All connected components of the subgraph $H$ are cactus.}\label{fig.ac}
		\end{center}
	\end{figure}

	Assume that $P_1$ and  $P_2$ are different.
	Let $z_1$ be the last vertex shared by $P_1$ and $P_2$ advancing from $x_1$ and let $z_2$ be the first vertex shared by $P_1$ and $P_2$ advancing from $z_1$ in $P_2$.
	Notice that $z_1\not= z_2$.
	Take the cycle $C_3$ formed with the $z_1-z_2$ paths in $P_1$ and $P_2$.
	Let $P_1^*$ and $P_2^*$ be respectively the $z_1-z_2$  subpaths of $P_1$ and $P_2$ (see Figure \ref{fig.ac} e).
	We claim that the internal vertices of $P_2^*$ do not lie in $P_1'$.
	Otherwise, consider the first vertex $t$ of $P_1'$ lying also in $P_2^*$.
	The cycle beginning in $x_1$, formed by the edge $e_1$, the $y_1-t$  path contained in $P_1'$, the $t-z_1$ path contained in $P_2^*$ and the $z_1-x_1$ path contained in $P_1$ has exactly one appearance of an edge with label $u$, which is a contradiction
	(see Figure \ref{fig.ac} f).
	By Proposition \ref{claims123}, the labels of edges belonging to  $P_1^*$  appear an even number of times in cycle $C_3$, but they also appear an even number of times in cycle $C_1$.
	But this is only possible if they appear exactly two times in $P_1^*$, since $H$ contains exactly two edges with the same label.
	By Proposition \ref{claims123}, $P_1^*$ must be a  closed path, which is a contradiction.
\end{proof}

Next,  we establish a relation between some parameters of bipartite graphs having cactus as connected components.
We denote by $\cc (G)$ the number of connected components of a graph $G$.

\begin{lemma}\label{lemacactus}
	Let $H$ be a bipartite graph of order at least 4 such that all its connected components are cactus. 
	Then, $|V(H)|\ge \frac 34 |E(H)|+\cc(H)\ge \frac 34 |E(H)|+1$.
\end{lemma}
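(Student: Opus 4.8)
The plan is to prove the inequality one connected component at a time and then add up. Write $H_1,\dots,H_t$ for the connected components of $H$, so $t=\cc(H)\ge 1$; each $H_j$ is a connected bipartite cactus, and since $|V(\cdot)|$ and $|E(\cdot)|$ are additive over components, it is enough to establish that every connected bipartite cactus $H'$ satisfies $|V(H')|\ge \frac34|E(H')|+1$. Summing these $t$ inequalities gives $|V(H)|\ge \frac34|E(H)|+t=\frac34|E(H)|+\cc(H)$, and $\cc(H)\ge 1$ then yields the second inequality.

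For the connected case I would argue through the block decomposition of $H'$. Recall two standard facts about a connected graph: its edges are partitioned by its blocks, so $|E(H')|=\sum_{B}|E(B)|$; and $|V(H')|-1=\sum_{B}\bigl(|V(B)|-1\bigr)$, the sum ranging over all blocks $B$ (this follows, for instance, by induction on the number of blocks, peeling off a leaf of the block-cut tree, which meets the rest of the graph in exactly one cut vertex). Because $H'$ is a cactus, each block $B$ is either a bridge or a cycle, and because $H'$ is bipartite every cycle block has even length $\ell(B)\ge 4$. Hence, for every block $B$ we get the local estimate $|V(B)|-1\ge \frac34|E(B)|$: if $B$ is a bridge this is $1\ge \frac34$, and if $B$ is a cycle of length $\ell(B)$ it is $\ell(B)-1\ge \frac34\,\ell(B)$, i.e.\ $\ell(B)\ge 4$.

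Summing the local estimate over all blocks and using the two identities, $|V(H')|-1=\sum_B\bigl(|V(B)|-1\bigr)\ge \frac34\sum_B|E(B)|=\frac34|E(H')|$, that is, $|V(H')|\ge \frac34|E(H')|+1$, which is what we want. (Components that are a single vertex or a single edge are covered as degenerate cases, the block sum being empty or a single bridge.) Assembling the componentwise bounds completes the proof.

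I do not anticipate a real obstacle; the only point needing a little care is the block-decomposition identity $|V(H')|-1=\sum_B(|V(B)|-1)$, together with the observation (which is exactly what forces the sharp constant $\frac34$) that in a bipartite cactus the blocks that are "most expensive" in edges per vertex are precisely the $4$-cycles — and that is where equality occurs, e.g.\ for several $4$-cycles sharing one common vertex. An alternative, block-free route is induction on $|V(H')|$: delete a pendant vertex to remove one vertex and one bridge, or, if there is no pendant vertex, delete all but the attachment vertex of a pendant cycle block to remove $\ell-1\ge 3$ vertices and $\ell\ge 4$ edges; in both cases the inequality is preserved. I would present the block-counting version, since it is shorter and makes the extremal configurations transparent.
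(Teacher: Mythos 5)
Your proof is correct, but it takes a genuinely different route from the paper's. The paper treats $H$ as a planar graph with exactly $\cy(H)+1$ faces, where $\cy(H)$ is the number of cycles, and applies the generalized Euler formula $(\cy(H)+1)+|V(H)|=|E(H)|+(\cc(H)+1)$; setting $\ex(H)=|E(H)|-4\,\cy(H)$ this rearranges to the exact identity $|V(H)|=\frac34|E(H)|+\frac14\ex(H)+\cc(H)$, and the inequality follows since $\ex(H)\ge 0$ (every cycle of a bipartite graph has length at least $4$) and $\cc(H)\ge 1$. Your argument replaces the planarity/face count by the block decomposition of each component: the identities $|E(H')|=\sum_B|E(B)|$ and $|V(H')|-1=\sum_B\bigl(|V(B)|-1\bigr)$ reduce everything to the per-block estimate $|V(B)|-1\ge\frac34|E(B)|$, which holds because each block of a cactus is a bridge or an even cycle of length at least $4$. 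The two proofs are really dual views of one count --- in a cactus the number of cycles equals the number of cycle blocks, which equals the cycle rank $|E|-|V|+\cc$, so the paper's condition $\ex(H)\ge 0$ is exactly the sum of your local block estimates. What your version buys is a more elementary and self-contained argument (no appeal to Euler's formula, whose face count for a cactus the paper leaves implicit) and a transparent description of the extremal cases ($4$-cycle blocks); what the paper's version buys is an exact formula for $|V(H)|$ rather than an inequality, though that extra precision is not used elsewhere. Note also that neither proof actually needs the hypothesis that the order is at least $4$.
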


\begin{proof}
	Let $\cy(H)$ denote the  number of cycles of $H$. 
	Since $H$ is a planar graph with $\cy(H)+1$ faces and $\cc(H)$  components, the equality follows from the generalization of Euler's Formula:
	$$(\cy(H)+1)+|V(H)|=|E(H)|+(\cc(H)+1).$$
	Let $\ex (H)=|E(H)|-4\, \cy(H)$.  Then,
	\begin{align*}
	|V(H)|&=|E(H)|-\cy(H)+\cc (H)
	=|E(H)|-\frac 14 (|E(H)-\ex (H))+\cc (H)\\
	&=\frac 34 |E(H)|+\frac 14 \ex (H) +\cc (H).
	\end{align*} 
	But $\cc (H)\ge 1$,  and  $\ex (H)\ge 0$ as all cycles of a bipartite graph have at least 4 edges. Thus,
	$$|V(H)|=
	\frac 34 |E(H)|+\frac 14 \ex (H) +\cc (H) \ge
	\frac 34 |E(H)|+\cc (H) \ge  \frac 34 |E(H)| +1.$$
	
\vspace{-1.3cm}\end{proof}

\vspace{.4cm}

The preceding result allows us to give a lower bound of the order of some subgraphs of the graph associated with a distinguishing set.

\begin{corollary}\label{cor:claim.subgraphcactus}
Let $S$ be a distinguishing set of a graph $G$ and $\emptyset \not= S'\subseteq S$.
Consider a subgraph $H$ of $G^S$ induced by a set of edges containing exactly two edges with label $u$ for each $u\in S'\subseteq S$.
Let $r'=|S'|$. Then, $|V(H)|\ge \frac 3 2 r'+1$.
\end{corollary}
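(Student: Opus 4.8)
The plan is to deduce Corollary~\ref{cor:claim.subgraphcactus} by combining the two preceding results. First I would note that $H$ is a subgraph of $G^S$, and $G^S$ is bipartite by Proposition~\ref{claims123}~(2); hence $H$ is bipartite as well. By Lemma~\ref{lem:claim.subgraphcactus}, every connected component of $H$ is a cactus. So $H$ satisfies the structural hypotheses of Lemma~\ref{lemacactus}, provided its order is at least~$4$. The number of edges of $H$ is exactly $2r'$, since by construction $H$ contains exactly two edges labeled $u$ for each $u\in S'$ and $|S'|=r'$. Applying the inequality of Lemma~\ref{lemacactus} then gives $|V(H)|\ge \frac34|E(H)|+1 = \frac34\cdot 2r'+1 = \frac32 r'+1$, which is the claimed bound.

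The one point that needs care is the order hypothesis $|V(H)|\ge 4$ in Lemma~\ref{lemacactus}. The plan is to handle the small cases $r'=1$ and $r'=2$ separately (when $3r'/2+1 \le 4$, i.e.\ $r'\le 2$), or alternatively to observe directly that $H$ has at least $\frac32 r' +1$ vertices in those cases. For $r'=1$, $H$ consists of a single edge with label $u$ together with whatever other edges the chosen edge set contributes; if $H$ is exactly one edge it has $2$ vertices and $\frac32\cdot 1 + 1 = \frac52$, so $2 \ge \frac52$ fails — hence one must be slightly more careful about what ``induced by a set of edges'' means, namely that the two edges labeled $u$ are genuinely present, giving at least the vertices of those two edges. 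Two incident edges share a vertex, giving $3$ vertices; two disjoint edges give $4$ vertices; but two edges with the \emph{same} label cannot be incident, by Proposition~\ref{claims123}~(3). So for $r'=1$ the two edges labeled $u$ are vertex-disjoint and $|V(H)|\ge 4 \ge \frac52$, and moreover the order hypothesis of Lemma~\ref{lemacactus} is met. For $r'\ge 2$ we have $|E(H)|=2r'\ge 4$ and, since $H$ has no isolated vertices (it is induced by its edge set), $|V(H)|\ge$ the number needed to make Euler's formula meaningful; in any case $|V(H)|\ge 4$ once there are at least two disjoint labeled edges, which again holds since same-labeled edges are non-incident.

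So the key steps, in order, are: (i) record that $H$ is bipartite (restriction of the bipartite $G^S$); (ii) invoke Lemma~\ref{lem:claim.subgraphcactus} so that all components of $H$ are cactus; (iii) observe $|E(H)|=2r'$ from the construction; (iv) check $|V(H)|\ge 4$ using that the two edges of each label are vertex-disjoint (Proposition~\ref{claims123}~(3)); (v) apply Lemma~\ref{lemacactus} and substitute to get $|V(H)|\ge \frac32 r'+1$. The main obstacle is the bookkeeping in step~(iv): making sure the order threshold of Lemma~\ref{lemacactus} is actually satisfied in the edge-minimal configurations, which I expect to resolve cleanly via the observation that incident edges of $G^S$ carry distinct labels.
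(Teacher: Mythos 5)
Your proposal is correct and follows essentially the same route as the paper: verify $|V(H)|\ge 4$ by noting that the two edges sharing a label are vertex-disjoint (Proposition~\ref{claims123}~(3)), then combine Lemma~\ref{lem:claim.subgraphcactus} with Lemma~\ref{lemacactus} and substitute $|E(H)|=2r'$. The extra case discussion for small $r'$ is harmless but unnecessary once the disjointness observation is in place, since it already yields the order hypothesis of Lemma~\ref{lemacactus} for every nonempty $S'$.
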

\begin{proof} Since two edges of $G^S$ with the same label have no common endpoints, we have $|V(H)|\ge 4$, and the result follows by applying Lemmas~\ref{lem:claim.subgraphcactus} and \ref{lemacactus} to $H$.
\end{proof}

Next lemma states a property about the difference of the order and the number of connected components of a subgraph and will be used to prove the main result of this work. 

\begin{lemma}\label{componentsSubgraf} 
If $H$ is a subgraph of $G$, then $|V(G)|-\cc (G)\ge |V(H)|-\cc (H)$.
\end{lemma}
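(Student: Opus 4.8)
The plan is to use the standard fact that for any graph $G$ the quantity $|V(G)|-\cc(G)$ equals the number of edges of a spanning forest of $G$, and that a spanning forest is an \emph{edge-maximal} acyclic subgraph: no acyclic subgraph of $G$ has more edges. So the statement is just the monotonicity of this parameter under passing to subgraphs.

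First I would take a spanning forest $F$ of $H$, so $F$ is an acyclic subgraph of $H$ with $|E(F)|=|V(H)|-\cc(H)$. Since $H$ is a subgraph of $G$, $F$ is also an acyclic subgraph of $G$ (its vertex set is contained in $V(G)$). Next I would extend $F$ to a spanning forest $F'$ of $G$: start from $F$ together with all vertices of $V(G)\setminus V(H)$ adjoined as isolated vertices, then greedily add edges of $G$ that do not create a cycle until none can be added. The resulting $F'$ is acyclic, spans $V(G)$, and is edge-maximal, hence a spanning forest of $G$, so $|E(F')|=|V(G)|-\cc(G)$. Since $E(F)\subseteq E(F')$, we conclude $|V(G)|-\cc(G)=|E(F')|\ge |E(F)|=|V(H)|-\cc(H)$, which is exactly the claimed inequality.

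As an alternative, one can argue by a short induction on $(|V(G)|-|V(H)|)+(|E(G)|-|E(H)|)$, reducing to two elementary moves. Deleting one edge leaves $|V|$ unchanged and increases $\cc$ by at most one, hence does not increase $|V|-\cc$; deleting an isolated vertex decreases both $|V|$ and $\cc$ by exactly one, hence leaves $|V|-\cc$ unchanged. Deleting a non-isolated vertex is handled by first removing all its incident edges and then removing it as an isolated vertex. Either way, $|V|-\cc$ is non-increasing along the chain from $G$ down to $H$.

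The argument is elementary and I do not expect a real obstacle; the only point needing a little care is the bookkeeping for the vertices of $G$ that do not occur in $H$ (relevant in the intended applications, where $H$ is an edge-induced subgraph of $G^S$), which is precisely why I would phrase the forest-extension step so that those vertices are explicitly inserted as isolated components before the greedy edge additions.
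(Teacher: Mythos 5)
Your proposal is correct. Your primary argument is a genuinely different route from the paper's: you identify $|V(G)|-\cc(G)$ with the number of edges of a spanning forest (the rank of the graphic matroid) and deduce the inequality from the fact that any acyclic subgraph of $G$ --- in particular a spanning forest of $H$ --- extends to a spanning forest of $G$; this is clean, and it buys a conceptual explanation (monotonicity of matroid rank under restriction) at the cost of invoking the extension property of forests. The paper instead does exactly what you sketch as your alternative: it observes that $H$ is reached from $G$ by successively deleting vertices and edges, and checks case by case that each single deletion (isolated vertex, non-isolated vertex, bridge, non-bridge edge) does not increase $|V|-\cc$. That argument is more elementary and entirely self-contained, though slightly longer in its case analysis; your variant of it, which reduces non-isolated vertex deletion to edge deletions followed by an isolated-vertex deletion, is a mild simplification of the paper's four cases. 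Your care with the vertices of $V(G)\setminus V(H)$, inserted as isolated components before the greedy extension, is exactly the right bookkeeping and matches the role the lemma plays later, where $H$ is an edge-induced subgraph of $H_{U\setminus S}$.
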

\begin{proof} Since every subgraph of $G$ can be obtained by successively removing vertices and edges from $G$, it is enough to prove that the inequality holds whenever a vertex or an edge is removed from $G$.

Let $u\in V(G)$. If $u$ is an isolated vertex in $G$, then 
the order and the number of components decrease in exactly one unit when removing $u$ from $G$, so that the given inequality holds.
If $u$ is a non-isolated vertex, then the order decreases in one unit while the number of components does not decrease when removing $u$ from $G$. Thus, the given inequality holds.

Now let $e\in E(G)$. Notice that the order does not change when removing an edge from $G$.  If $e$ belongs to a cycle, then the number of components does not change when removing $e$ from $G$, and the given inequality holds. If $e$ does not belong to a cycle, then the  the number of components increases in exactly one unit when removing $e$ from $G$, and the given inequality holds.
\end{proof}

\newpage
\section{The bipartite case}\label{sec:bipartite}

This section is devoted to solve the equation $\lambda (\overline{G}) =\lambda (G)  + 1$  when we restrict ourselves to bipartite graphs.
In the sequel,  $G=(V,E)$ stands for  a bipartite connected graph of order $n=r+s\ge4$, such that  $V=U\cup W$, where $U$ and $W$ are the stable sets and  $$1\le |U|=r\le s=|W|.$$

In the study of LD-sets, vertices with the same neighborhood play an important role, since at least one of them must be in an LD-set.
We say that two vertices $u$ and $v$ are \emph{twins} if  either $N(u)=N(v)$ or $N(u)\cup \{ u \}=N(v)\cup \{v \}$.

\begin{lemma}\label{prop.SW} Let $S$ be an LD-code of
	$G$. Then, $\lambda (\overline{G})\le \lambda (G)$ if any of the following conditions hold.
	\begin{enumerate}[i)]
		\item $S\cap U\not= \emptyset$ and $S\cap W\not= \emptyset$.
		\item $r<s$ and  $S=W$.
		\item $2^r\le s$.
	\end{enumerate}
\end{lemma}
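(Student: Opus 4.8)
The plan is to lean on Propositions~\ref{pro.domi} and \ref{pro.vertexdom}, which say that an LD-set $S$ of $G$ is automatically an LD-set of $\overline{G}$ unless there is a (unique) vertex $x_0\in V\setminus S$ adjacent to every vertex of $S$, and that when such $x_0$ exists, $S\cup\{x_0\}$ is an LD-set of $\overline{G}$. So for each of the three hypotheses I will show that either no such $x_0$ exists — and then $S$ itself gives $\lambda(\overline{G})\le|S|=\lambda(G)$ — or one can replace a single vertex of $S$ by $x_0$ to get an LD-set of $\overline{G}$ of the same size. Hypothesis (i) is immediate: in a bipartite graph every vertex sends all its edges to one stable set, so no vertex can dominate the whole of $S$ once $S$ meets both $U$ and $W$; hence $S$ is already an LD-set of $\overline{G}$.

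The core case is (ii): $S=W$, $r<s$, and (else we are already done) a vertex $x_0\in U$ with $N(x_0)=W$, unique by Proposition~\ref{pro.vertexdom}. I will take $S'=(W\setminus\{w_0\})\cup\{x_0\}$ for a well-chosen $w_0\in W$, so that $|S'|=s=\lambda(G)$, and check that $S'$ is distinguishing and dominating in $\overline{G}$. Domination is routine: each $u\in U\setminus\{x_0\}$ is non-adjacent in $G$ to $x_0$, hence adjacent in $\overline{G}$ to $x_0\in S'$, and $w_0$ is adjacent in $\overline{G}$ to the non-empty set $W\setminus\{w_0\}\subseteq S'$. For the distinguishing property, the only vertices outside $S'$ are those of $U\setminus\{x_0\}$ together with $w_0$; since $N(w_0)\cap S'=\{x_0\}$ while $N(u)\cap S'\subseteq W$ for $u\in U\setminus\{x_0\}$, the vertex $w_0$ is automatically separated from all of them, and two vertices $u,u'\in U\setminus\{x_0\}$ fail to be separated by $S'$ only if their neighborhoods in $W$ differ precisely in $w_0$ — that is, only if $uu'$ is an edge of $G^S$ labeled $w_0$. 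Hence it suffices to choose $w_0\in W$ that is not a label of $G^S$, and such a choice exists because $G^S$ carries at most $|V(G^S)|-1=r-1<s$ distinct labels: picking one edge per distinct label yields a subgraph with no cycle, since a cycle of $G^S$ contains an even number (hence at least two) of edges of some label by Proposition~\ref{claims123}(4). Therefore $S'$ is an LD-set of $\overline{G}$ and $\lambda(\overline{G})\le s=\lambda(G)$.

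It remains to see that (iii), $2^r\le s$, reduces to the two previous cases. Assume $S$ is not already an LD-set of $\overline{G}$, so some vertex dominates all of $S$; if $S$ met both $U$ and $W$ we would be in case (i), so $S$ lies inside one stable set. If $S\subseteq U$, then being distinguishing forces the $s$ vertices of $W$ to have pairwise distinct neighborhoods inside $S$, whence $s\le 2^{|S|}\le 2^{r}\le s$; equality makes $S=U$ with every subset of $U$ equal to $N(w)$ for some $w\in W$, so some $w$ has $N(w)=\emptyset$ and is not dominated by $S=U$, a contradiction. If $S\subsetneq W$, a vertex of $W\setminus S$ has no neighbor in $S$ and lies outside $S$, again contradicting domination. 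Hence $S=W$; and $r=s$ would give the impossible $2^{r}\le r$, so $r<s$, which is exactly case (ii).

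The delicate point is the construction in (ii), and within it the existence of the pivot $w_0$; everything else is a short verification using bipartiteness and the definition of domination. The quantitative input that makes $w_0$ available — that the number of distinct edge-labels of $G^S$ is strictly less than $|V(G^S)|$ — is an immediate consequence of the cycle-parity property in Proposition~\ref{claims123}(4), in the same spirit as Lemma~\ref{lem:claim.subgraphcactus}.
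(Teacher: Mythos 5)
Your proof is correct. Parts (i) and (iii) run essentially as in the paper: bipartiteness prevents any vertex from dominating a set $S$ that meets both stable sets, so Proposition~\ref{pro.vertexdom} applies directly, and $2^r\le s$ forces $S=W$ with $r<s$ by the domination/counting argument, reducing to case (ii). For case (ii), however, you take a genuinely different route. The paper stays inside $G$: since $|U|=r<s=\lambda(G)$, the dominating set $U$ cannot be distinguishing, so $W$ contains two vertices $w_1,w_2$ with $N(w_1)=N(w_2)$; replacing $w_1$ by one of its neighbours in $U$ yields another LD-code of $G$ that meets both stable sets, and case (i) finishes. You instead build an LD-set of $\overline{G}$ directly: when the obstructing vertex $x_0$ with $N(x_0)=W$ exists, you exchange it for a vertex $w_0\in W$ that is not the label of any edge of $G^{W}$, and you justify the existence of such a $w_0$ by noting that choosing one edge per distinct label gives an acyclic subgraph of $G^{W}$ (by the cycle-parity property of Proposition~\ref{claims123}), hence at most $r-1<s$ distinct labels occur. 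Both arguments are sound; the paper's is shorter and needs no machinery beyond Proposition~\ref{pro.vertexdom}, while yours exhibits the modified code explicitly in $\overline{G}$ and anticipates the associated-graph and forest-counting techniques that the paper only deploys later, in Lemmas~\ref{lem:claim.subgraphcactus} and \ref{lemacactus} and Theorem~\ref{pro.condnecsuf}.
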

\begin{proof}
	If $S$ satisfies item i), then
	the LD-code of $G$ is a distinguishing set of $\overline{G}$ and it is dominating in $\overline{G}$ because there is no vertex in $G$
	with neighbors in both stable sets. 
	Thus, $\lambda (\overline{G})\le \lambda (G)$.
	
	Next, assume that  $r<s$ and  $S=W$. 	
	In this case, $\lambda(G)=|W|>|U|$ and thus $U$ is not an LD-set, but it is a dominating set since $G$ is connected. Therefore, there exists a pair of  vertices $w_1,w_2\in W$ such that $N(w_1)=N(w_2)$. Hence,  $W-\{ w_1 \}$ is an LD-set of $G-w_1$. Let $u\in U$ be a vertex adjacent to $w_1$ (it exists since $G$ is connected), and notice that
	$(W\setminus \{ w_1 \})\cup \{ u \}$ is an LD-code of $G$ with vertices in both stable sets, which,  by the preceding item, means that $\lambda (\overline{G})\le \lambda (G)$.
	
	Finally, if  $2^r\le s$ then  $S\neq U$, which means that $S$ satisfies either item i) or item ii).
\end{proof}

\begin{proposition}\label{pro.r1o2}
	If $G$ has order at least 4 and $1\le r \le 2$, then $\lambda (\overline{G}) \le \lambda (G)$.
\end{proposition}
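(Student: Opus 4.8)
The plan is to prove Proposition~\ref{pro.r1o2} by a case analysis on $r\in\{1,2\}$, in each case exhibiting an LD-set of $\overline{G}$ of size at most $\lambda(G)$, and where possible invoking Lemma~\ref{prop.SW}. First I would handle $r=1$: then $G$ is the star $K_{1,n-1}$, and by Table~\ref{tab.valors} (equivalently, Proposition~\ref{donosti}) we have $\lambda(G)=\lambda(\overline{G})=n-1$, so the inequality holds. Alternatively, and more self-containedly, one can note that when $r=1$, $\overline{G}$ is the disjoint union of $K_{n-1}$ and an isolated vertex, whose location-domination number is easily $(n-2)+1=n-1$, matching $\lambda(G)=n-1$.

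The substantive case is $r=2$, say $U=\{u_1,u_2\}$. Let $S$ be an LD-code of $G$. By Lemma~\ref{prop.SW}(i) we are done if $S$ meets both $U$ and $W$, and by Lemma~\ref{prop.SW}(ii) we are done if $r<s$ and $S=W$; also if $2^r=4\le s$ we are done by Lemma~\ref{prop.SW}(iii). So it remains to treat the situations where either $S=U$, or ($S=W$ and $r=s=2$), with the additional standing constraint $s\le 3$ when $S=U$ is forced to be considered. If $S=U=\{u_1,u_2\}$, then since $S$ distinguishes all vertices of $W$ and $|W|=s\le 3$, the vertices of $W$ have pairwise distinct neighborhoods among the three nonempty subsets $\{u_1\},\{u_2\},\{u_1,u_2\}$ of $U$ (nonempty because $S$ must dominate, up to one exceptional vertex — here there is no room for an undominated vertex since every vertex of $W$ needs a distinct trace and an undominated one would force $s\le 3$ with one empty trace, which we can absorb). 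One then checks directly, for each of the few resulting bipartite graphs on $\le 5$ vertices, that $S$ itself is already a dominating set of $\overline{G}$ — indeed $u_1,u_2$ are nonadjacent in $G$ hence adjacent in $\overline{G}$, and each $w\in W$ fails to be $\overline{G}$-dominated by $S$ only if $N_G(w)\cap S=S$, i.e. $w$ is adjacent to both $u_1,u_2$; since the traces are distinct there is at most one such $w$, and by Proposition~\ref{pro.vertexdom}(a) the set $S\cup\{w\}$ is an LD-set of $\overline{G}$, giving $\lambda(\overline{G})\le \lambda(G)+1$ — but we want $\le\lambda(G)$, so here one argues that in these small graphs one can instead pick an LD-code with vertices in both parts and apply Lemma~\ref{prop.SW}(i), or exhibit the smaller set by hand. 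The remaining subcase $r=s=2$ with $S=W$ is symmetric to $S=U$ (swap the roles of $U$ and $W$), or handled by the $C_4$/$P_4$ entries of Table~\ref{tab.valors}.

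The key steps, in order, are: (1) dispose of $r=1$ via the star; (2) for $r=2$, use Lemma~\ref{prop.SW}(i),(ii),(iii) to reduce to the cases $S=U$ with $s\le 3$ and $S=W$ with $r=s=2$; (3) in each remaining case, either produce directly an LD-code of $G$ meeting both stable sets (then apply Lemma~\ref{prop.SW}(i)), or verify by a short finite check on the associated graph $G^S$ — which has only $n-r\le 3$ vertices — that $S$ or a small modification is an LD-set of $\overline{G}$ of the required size. The main obstacle I expect is step~(3) in the subcase $S=U$: one must be careful that the naive argument via Proposition~\ref{pro.vertexdom}(a) only gives $\lambda(\overline{G})\le\lambda(G)+1$, so the real content is showing that whenever $S=U$ is an LD-code one can actually find an LD-code with vertices in both parts (e.g. by replacing the unique vertex $w$ of $W$ adjacent to both $u_1,u_2$, when it exists, by a cleverly chosen neighbor), which is a small but genuine combinatorial check rather than a pure invocation of earlier lemmas.
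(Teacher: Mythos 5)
Your overall plan coincides with the paper's: dispose of $r=1$ via the star, use Lemma~\ref{prop.SW} to kill every case with $s\ge 4$ (and every LD-code meeting both parts), and finish the finitely many remaining small graphs by inspection. The paper simply lists those graphs ($P_4$, $C_4$ for $s=2$; $P_5$, $K_{2,3}$, $K_2(2,3)$ and the banner for $s=3$) and quotes their known values of $\lambda$ and $\lambda(\overline{\cdot})$; your reduction via Lemma~\ref{prop.SW} is in fact slightly finer, since among these only the graphs in which $U$ (or, when $r=s=2$, $W$) is an LD-code survive, namely $P_4$ and $P_5$.

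There is, however, one concrete misstep in what you single out as ``the real content'': the claim that whenever $S=U$ is an LD-code one can find an LD-code of $G$ with vertices in both parts. This fails for $P_5$. Writing $P_5$ as $1\text{--}2\text{--}3\text{--}4\text{--}5$ with $U=\{2,4\}$, a direct check of all ten $2$-subsets shows that $\{2,4\}$ is the \emph{unique} LD-code of $P_5$, so no LD-code meets both stable sets, and your proposed repair (swapping out the vertex adjacent to both of $u_1,u_2$) cannot produce one. Moreover $U$ is not an LD-set of $\overline{P_5}$, since vertex $3$ satisfies $N(3)\cap U=U$, so Proposition~\ref{pro.vertexdom} only yields $\lambda(\overline{P_5})\le 3$. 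The correct resolution is the one you list only as a fallback: exhibit an LD-set of $\overline{P_5}$ of size $2$ directly, e.g.\ $\{3,4\}$, whose traces in $\overline{P_5}$ are $\{3,4\}$, $\{4\}$, $\{3\}$ for the vertices $1$, $2$, $5$ respectively; note that this set is not an LD-set of $P_5$ itself, so it cannot be reached by any argument that stays inside the LD-codes of $G$. With that substitution (or simply by quoting Table~\ref{tab.valors}, as the paper does), your proof goes through.
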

\begin{proof}
	If $r=1$, then $G$ is the star $K_{1,n-1}$ and $\lambda (\overline{G})=\lambda (G)=n-1$.

	Suppose  that $r=2$. We distinguish cases (see Figure \ref{fig.r1o2}).
	\begin{itemize}
	
	\item If $s\ge 2^2=4$ then,  by Lemma \ref{prop.SW}, $\lambda (\overline{G})\le \lambda (G)$.
	
	\item If $s=2$,  then $G$ is either $P_4$ and $\lambda (\overline{P_4})= \lambda (P_4)=2$, or $G$ is $C_4$ and $\lambda (\overline{C_4})= \lambda (C_4)=2$.

	\item If $s=3$, then $G$ is either $P_5$, $K_{2,3}$,  $K_2(2,3)$,  or the banner $\mathrm{P}$, and
	$\lambda (\overline{P_5})= \lambda (P_5)=2$,
	$\lambda (\overline{K_{2,3}})= \lambda (K_{2,3})=3$,
	$2=\lambda (\overline{K_2(2,3)})< \lambda (K_2(2,3))=3$, and
	$2=\lambda (\overline{\mathrm{P}})<\lambda (\mathrm{P})=3$.
	
	\end{itemize}
\vspace{-.85cm}\end{proof}

	\begin{figure}[!hbt]
		\begin{center}
			\includegraphics[width=0.65\textwidth]{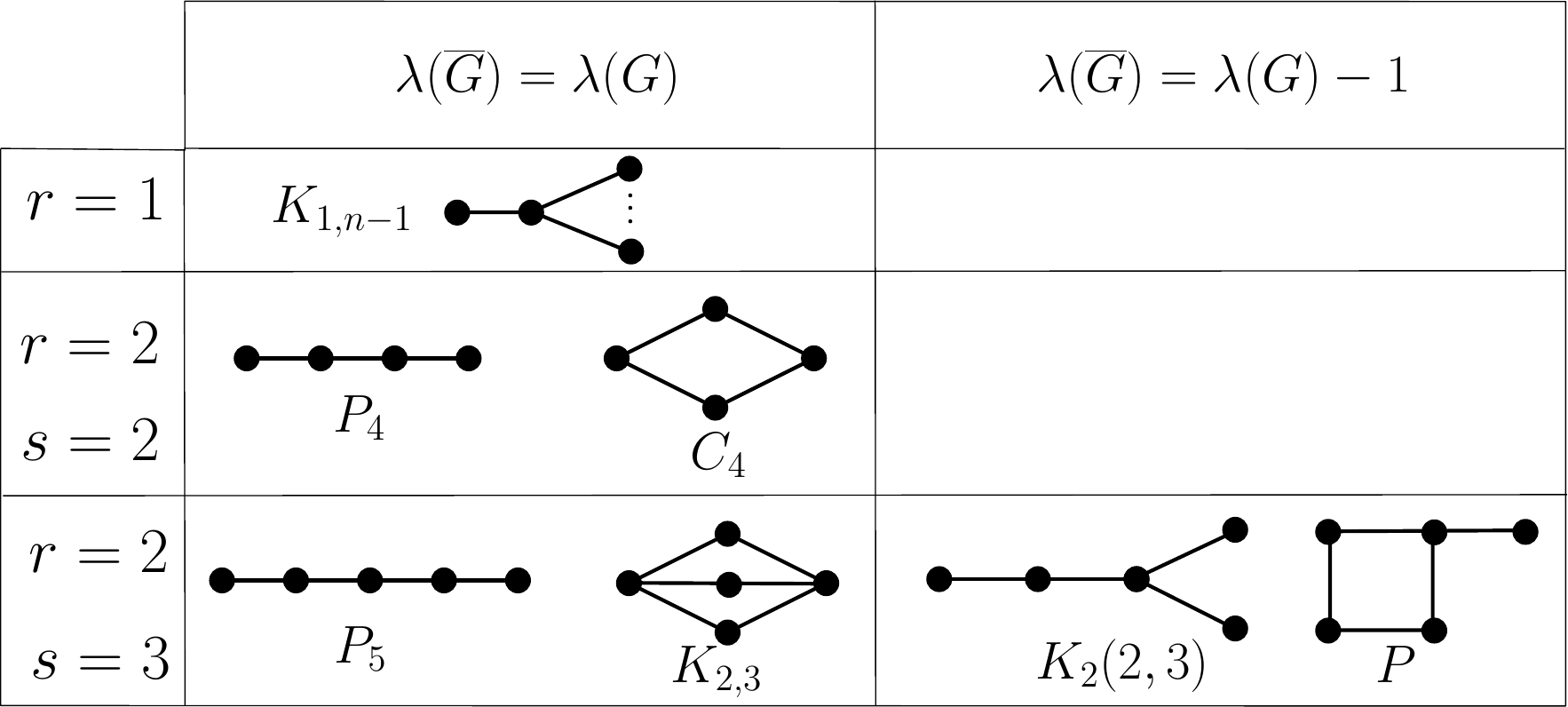}
			\caption{Some bipartite graphs with $1\le r \le 2$.}\label{fig.r1o2}
		\end{center}
	\end{figure}

Since 
$\lambda (K_2)=1$, $\lambda (\overline{K_2})=2$, and $\lambda (\overline{P_3})=\lambda (\overline{P_3})=2$, by  Proposition~\ref{pro.r1o2} we have that $K_2$ is the only bipartite graph $G$ satisfying $\lambda (\overline{G})=\lambda (G)+1$, whenever $r\in \{ 1,2\}$.
From now on, we assume that $r\ge 3$.

\begin{proposition}\label{pro.rsiguals}
	If $r=s$, then $\lambda (\overline{G}) \le \lambda (G)$.  
\end{proposition}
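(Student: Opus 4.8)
The plan is to take an LD-code $S$ of $G$ and show that either $S$ itself is an LD-set of $\overline G$, or we can build an LD-set of $\overline G$ of size at most $|S|$. By Lemma~\ref{prop.SW}, we are done immediately if $S$ meets both stable sets, so the only case to handle is when $S$ is entirely contained in one side, say $S\subseteq W$ (the case $S\subseteq U$ is symmetric since $r=s$). By Proposition~\ref{pro.vertexdom}, $S$ fails to be an LD-set of $\overline G$ precisely when there is a vertex $u\in V\setminus S$ with $N(u)\cap S=S$; and if no such vertex exists, $\lambda(\overline G)\le\lambda(G)$ and we are done. So assume such a $u$ exists. Since $S\subseteq W$ and $u$ is adjacent to every vertex of $S$, bipartiteness forces $u\in U$. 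Note also that since $S\subseteq W$ dominates $G$, every vertex of $U$ has a neighbor in $S$, i.e.\ no vertex of $U$ lies at level $0$ of $G^S$; and at most one vertex of $W\setminus S$ is undominated.

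The key step is a counting argument using the associated graph $G^S$ and the cactus lemmas. Write $k=|S|\le s=r$. The vertices of $U$ (all $r$ of them, since $S\subseteq W$ means $U\subseteq V\setminus S$) sit at levels $1,\dots,k$ of $G^S$, and one of them, $u$, sits at the top level $k$. I would argue that if $S$ is "too small'' — concretely if $k=r$ — then the structure is forced in a way that yields a contradiction or an explicit smaller LD-set of $\overline G$; and if $k<r$, then $G^S$ has enough vertices ($n-k = 2r-k > r$) distributed on $k$ levels that one can find a vertex $v\in V\setminus S$ with $v\ne u$ and $N(v)\cap S\ne S$, and then swap: replace some element of $S$ by $u$. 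More precisely, the idea is that $S'=(S\setminus\{w\})\cup\{u\}$ for a suitable $w\in S$ is still distinguishing (one must check $S\setminus\{w\}$ together with the new "coordinate'' $u$ separates everything — this is where Proposition~\ref{pro:basic}(4)–(5) and Corollary~\ref{pro:suprimirVarios} come in, analyzing the component of $w$ in $H_{\{w\}}\subseteq G^S$), and $S'$ now meets both stable sets, so Lemma~\ref{prop.SW}(i) finishes.

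The main obstacle I anticipate is the swap argument: showing that some single-vertex exchange $S\mapsto (S\setminus\{w\})\cup\{u\}$ preserves the distinguishing property. The danger is that removing $w$ from $S$ merges a pair of vertices that $u$ does not re-separate, and that $u$ could itself fall into the merged class (it won't, since $u$ was at the top level and is distinguished from everything by having a full neighborhood, but the other vertices need care). I would control this by choosing $w$ to be the label of an edge incident to $u$ in $G^S$ if such an edge exists, so that $u$ directly witnesses the separation across that merge; if $u$ is isolated in $G^S$ (no vertex of $V\setminus S$ has neighborhood exactly $S\setminus\{w\}$ for any $w$), then in fact $S$ was already "locating with room to spare'' and one instead shows directly that $S\cup\{u\}\setminus\{w\}$ works for any $w$ lying on an edge of $G^S$, or that $\lambda(\overline G)$ is small by a dimension count: $n-k+1$ vertices must be distinguished by $k$ coordinates in $\overline G$ once $u$ is added, and $r=s$ keeps $n=2r$ balanced enough that $\binom{k}{\le k}\ge n-k$ forces $k\le r-1$ unless $G$ is one of finitely many small graphs, which can be checked by hand against Table~\ref{tab.valors} and Propositions~\ref{pro.r1o2}. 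Assembling these cases — generic swap, isolated-$u$ count, small exceptional graphs — gives $\lambda(\overline G)\le\lambda(G)$ in all cases.
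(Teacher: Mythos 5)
There is a genuine gap, and it stems from a structural observation you missed at the outset. If an LD-code $S$ of $G$ is contained in one stable set, say $S\subseteq W$, then every vertex of $W\setminus S$ has \emph{no} neighbour in $S$ (because $W$ is stable and $S\cap U=\emptyset$), so it is undominated; since LD-sets are dominating, this forces $S=W$, and symmetrically $S\subseteq U$ forces $S=U$. Hence in the only remaining case $\lambda(G)=r=s$ and the candidate LD-codes are exactly $U$ and $W$. This collapses the whole problem to a short case analysis, which is what the paper does: if, say, $W$ is an LD-code and no $u\in U$ has $N(u)=W$, then $W$ is already an LD-set of $\overline G$ by Proposition~\ref{pro.vertexdom}(b); if such a $u$ exists, one swaps a single vertex, and the swap is justified by the existence of a \emph{twin pair} on the other side (either a $w$ with $N(w)=U$ in the symmetric subcase, or, when $W$ is not an LD-code, two vertices $x,y\in U$ with $N(x)=N(y)$, which must exist because $W$ is dominating of size $r$ and yet not distinguishing). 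Removing one vertex of a twin pair is harmless precisely because its trace on the outside world is reconstructible from its twin's. Your case split ``$k=r$ versus $k<r$'' is partly vacuous ($k<r$ with $S\subseteq W$ cannot occur), and the level/cactus counting machinery of Section~\ref{sec:associatedGraph} is neither needed nor actually brought to a conclusion here.

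More importantly, the step you yourself identify as the main obstacle — that some exchange $S\mapsto(S\setminus\{w\})\cup\{u\}$ remains distinguishing — is never established. Your proposed choice of $w$ (the label of an edge of $G^S$ incident to $u$) does not address the real danger: other pairs of vertices of $W$ separated only by $w$ become merged, and $u$ cannot re-separate them in $\overline G$ since $N_G(u)=W$ makes $u$ non-adjacent in $\overline G$ to \emph{all} of $W$. The correct vertex to delete is one of a pair of twins, and the existence of such a pair is exactly what the $r=s$ hypothesis delivers (via the failure of the opposite stable set to be distinguishing). The fallback ``dimension count'' ($\binom{k}{\le k}\ge n-k$ forcing $k\le r-1$) is also not a valid argument as stated. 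As written, the proposal does not constitute a proof; with the observation $S\in\{U,W\}$ inserted and the swap grounded in twins rather than edge labels, it would reduce to the paper's argument.
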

\begin{proof}
	If $G$ has an LD-code with vertices at both stable sets, then $\lambda (\overline{G}) \le \lambda (G)$
	by Lemma~\ref{prop.SW}.
	In any other case, $G$ has at most two LD-codes, $U$ and $W$.
	
	If both $U$ and $W$ are LD-codes, then we distinguish the following cases.
	
	\begin{itemize}
	
	\item If there is no vertex $u\in U$ such that $N(u)=W$, then $W$ is an LD-set of $\overline{G}$, and consequently, $\lambda (\overline{G}) \le \lambda (G)$.
	
	\item Analogously, if there is no vertex $w\in W$ such that $N(w)=U$, then we derive $\lambda (\overline{G}) \le \lambda (G)$.
	
	\item If there exist vertices $u\in U$ and $w\in W$ such that $N(u)=W$ and $N(w)=U$, then $(U-\{u\})\cup \{w \}$  would be an LD-set of $\overline{G}$, and thus $\lambda (\overline{G}) \le \lambda (G)$.
	
	\end{itemize}

	Next,  assume that $U$ is an LD-code and $W$ is not an LD-code of $G$. If there is no vertex $w\in W$ such that $N(w)=U$, then $U$ is an LD-set of  $\overline{G}$, and so $\lambda (\overline{G}) \le \lambda (G)$. 
	Finally, suppose that there is a vertex $w\in W$ such that $N(w)=U$. 
	Note that $W$ is not a distinguishing set of $G$ (otherwise, it would be an LD-code because $W$ is a dominating set of size $r$). Therefore, there exist vertices $x,y\in U$ such that $N(x)=N(y)$. In such a case, $(U\setminus \{ x\})\cup \{ w \}$ is an LD-set of $\overline{G}$, and thus $\lambda (\overline{G}) \le \lambda (G)$.
\end{proof}

From Lemma~\ref{prop.SW} and  Propositions~\ref{pro.r1o2} and \ref{pro.rsiguals} we derive the following result.

\begin{corollary}\label{corin}
	If $\lambda (\overline{G}) =\lambda (G)  + 1$, then $3\le r<s \le 2^r-1$ and $U$ is the only LD-code of $G$.
\end{corollary}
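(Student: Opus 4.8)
The plan is to assemble the conclusion directly from the three results just established, by contraposition. Suppose $\lambda(\overline{G}) = \lambda(G) + 1$; we must show $3 \le r < s \le 2^r - 1$ and that $U$ is the unique LD-code of $G$. First I would dispose of the range of $r$: by Proposition~\ref{pro.r1o2}, if $r \le 2$ then $\lambda(\overline{G}) \le \lambda(G)$, contradicting the hypothesis, so $r \ge 3$ (this is exactly the reduction already announced in the paragraph following Proposition~\ref{pro.r1o2}, once we note $G \ne K_2$ since $r \ge 3$ forces $n \ge 6$). Next, Proposition~\ref{pro.rsiguals} rules out $r = s$, and since we always assume $r \le s$, we get $r < s$. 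For the upper bound $s \le 2^r - 1$: if instead $2^r \le s$, then Lemma~\ref{prop.SW}(iii) gives $\lambda(\overline{G}) \le \lambda(G)$, again a contradiction; hence $s \le 2^r - 1$. This settles the chain $3 \le r < s \le 2^r - 1$.

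It remains to prove that $U$ is the only LD-code of $G$. Let $S$ be any LD-code of $G$. If $S$ met both stable sets, i.e. $S \cap U \ne \emptyset$ and $S \cap W \ne \emptyset$, then Lemma~\ref{prop.SW}(i) would yield $\lambda(\overline{G}) \le \lambda(G)$, contrary to hypothesis; so every LD-code of $G$ is contained entirely in $U$ or entirely in $W$. An LD-code contained in $W$ is either a proper subset of $W$ or equals $W$. It cannot be a proper subset, because $W$ itself fails to be dominating once a vertex is deleted unless... — more carefully: if $S \subseteq W$ is an LD-code then $|S| = \lambda(G) \le |W| = s$; should $S = W$, Lemma~\ref{prop.SW}(ii) (applicable since $r < s$) gives the contradiction $\lambda(\overline{G}) \le \lambda(G)$; and if $S \subsetneq W$, then $W \setminus S \subseteq V \setminus S$ is nonempty and every vertex of $W \setminus S$ has empty neighborhood in $S$ (as $W$ is stable), so two such vertices would be indistinguishable, and if there is only one such vertex $w$ it is the unique non-dominated vertex, forcing $\lambda(G) = s - 1$ and leaving $W \setminus \{w\}$ a dominating distinguishing set of $\overline{G}$ — in fact the cleanest route is: any LD-code inside $W$ has size $\ge |W| - 1$...

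Here I expect the only genuine friction. The safe argument is: since $G$ is connected and bipartite with $r < s$, the set $U$ is dominating, so $\lambda(G) \le r < s$; therefore no subset of $W$ of size $\lambda(G) \le r$ can be dominating (a dominating set contained in the stable set $W$ must be all of $W$, since each vertex of $W$ dominates only itself within $W$). Hence $W$ contains no LD-code at all, and combined with the previous paragraph every LD-code of $G$ lies in $U$. Finally, $U$ is itself an LD-code: it is dominating (connectedness), it is distinguishing (vertices of $W$ are separated by their neighborhoods in $U$ iff they have distinct neighborhoods, and if two vertices of $W$ had equal neighborhoods then $U$ would fail to be distinguishing — but any LD-code lies in $U$, and an LD-code must exist, so the unique candidate $U$ must work), and it has size $r = \lambda(G)$ because no smaller LD-code exists. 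Uniqueness is then immediate: any LD-code lies in $U$ and has cardinality $r = |U|$, hence equals $U$.

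The one step to be careful about is the assertion ``$U$ is an LD-code,'' since a priori $U$ need only be dominating, not distinguishing. The resolution is that an LD-code exists, it must be contained in $U$ by the argument above, so it has size at most $r$; but we also showed every LD-code lies in $U$ and any proper subset of the stable set $U$ fails to dominate $W$-vertices that are ``private'' to the deleted vertices — so the LD-code is forced to be $U$ itself, which simultaneously proves $U$ is an LD-code, that $\lambda(G) = r$, and that $U$ is the unique LD-code. I would present this as a single clean deduction rather than the case analysis sketched above.
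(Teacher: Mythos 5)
Your proposal is correct and follows essentially the same route as the paper, which derives this corollary directly from Lemma~\ref{prop.SW} and Propositions~\ref{pro.r1o2} and~\ref{pro.rsiguals}: the bounds $3\le r<s\le 2^r-1$ come from those three results, and uniqueness of the LD-code follows because Lemma~\ref{prop.SW}(i) confines any LD-code to a single stable set, a dominating set contained in a stable set must be that whole stable set, and $S=W$ is excluded by Lemma~\ref{prop.SW}(ii), leaving $U$ as the only possibility. Two small cautions on justifications you yourself flag as shaky: the step ``$U$ is dominating, so $\lambda(G)\le r$'' is a non sequitur (domination alone does not make $U$ an LD-set, and you do not need this inequality), and ``a proper subset of $U$ fails to dominate $W$-vertices private to the deleted vertices'' gives the wrong reason (such private neighbours need not exist; the correct reason is that the deleted vertex of the stable set $U$ is itself undominated) --- neither slip affects your final deduction, which is sound.
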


Next theorem characterizes connected bipartite graphs satisfying the equation $\lambda (\overline{G}) =\lambda (G)  + 1$ in terms of the graph associated with a distinguishing set.

\begin{theorem}\label{pro.condnecsuf} Let $3\le r<s$. Then,
	$\lambda (\overline{G}) =\lambda (G)  + 1$
	if and only if the following conditions hold:
	\begin{enumerate}[i)]
		\item $W$ has no twins.
		\item There exists a vertex $w\in W$ such that $N(w)=U$.
		\item For every vertex $u\in U$, the graph $G^U$ has at least two edges with label $u$.
	\end{enumerate}
\end{theorem}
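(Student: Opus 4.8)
The plan is to prove both implications using the structure developed in Section~2, keeping in mind that by Corollary~\ref{corin} we may assume throughout that $U$ is the unique LD-code of $G$ and $3\le r<s\le 2^r-1$. Write $\lambda(G)=|U|=r$.

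First I would prove necessity. Suppose $\lambda(\overline G)=\lambda(G)+1$. By Corollary~\ref{corin}, $U$ is the only LD-code, so in particular $W$ is \emph{not} an LD-code; since $W$ is a dominating set of size $s\ge r$, if $W$ were distinguishing it would contain an LD-code of size $\le s$, but more to the point $W$ itself is distinguishing iff it has no twins inside it — and if $W$ had no twins it would be a distinguishing dominating set, hence an LD-set, forcing $\lambda(G)\le s$; this does not immediately contradict anything, so the cleaner route is: $U$ being the \emph{unique} LD-code rules out every size-$r$ LD-set with a vertex of $W$, and rules out $W$ being an LD-code only when $r<s$. The real leverage is Proposition~\ref{pro.vertexdom}: since $U$ is an LD-set but $U\cup\{w\}$ is \emph{not} forced, and since $\lambda(\overline G)>\lambda(G)$ means $U$ is \emph{not} an LD-set of $\overline G$, part (b) of Proposition~\ref{pro.vertexdom} gives a vertex $w\in W$ with $N(w)\cap U=U$, i.e. $N(w)=U$ — this is condition ii). For condition i), note that if $w_1,w_2\in W$ were twins, then (they have equal neighborhoods, as they lie in the same stable set) $U$ would still distinguish them only if... actually twins in $W$ are not distinguished by $U$ unless one of them is adjacent to every vertex — so I would argue that a pair of non-adjacent twins in $W$ contradicts that $U$ distinguishes $W\setminus U=W$; hence $W$ has no twins, giving i). For condition iii), this is where the $G^U$ machinery enters: I would show that if some label $u\in U$ appears on at most one edge of $G^U$, then one can build an LD-set of $\overline G$ of size $r$, contradicting $\lambda(\overline G)=r+1$. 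Concretely, by Proposition~\ref{pro:basic}(4)--(5) and Corollary~\ref{pro:suprimirVarios}, removing $u$ from $U$ merges exactly the endpoints of $u$-labeled edges into single neighborhood-classes; if there is at most one such edge, $S'=U\setminus\{u\}$ either remains distinguishing or fails to distinguish exactly one pair, and in $\overline G$ (using Proposition~\ref{pro:basic} and the fact that the level representation reverses) one checks $(U\setminus\{u\})\cup\{w'\}$ is an LD-set of $\overline G$ for a suitably chosen $w'$ — likely $w'=w$ from condition ii), or a vertex at the appropriate level. This counting-via-$G^U$ step is the main obstacle: one must show two $u$-labeled edges are genuinely \emph{needed}, which I expect requires Corollary~\ref{cor:claim.subgraphcactus} or Lemma~\ref{componentsSubgraf} to control how many vertices get merged.

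For sufficiency, assume i), ii), iii) hold. Condition ii) gives $w\in W$ with $N(w)=U$, so by Proposition~\ref{pro.vertexdom}(b), $U$ is \emph{not} an LD-set of $\overline G$; combined with Proposition~\ref{pro.vertexdom}(a), $U\cup\{w\}$ \emph{is} an LD-set of $\overline G$, so $\lambda(\overline G)\le r+1$. It remains to show $\lambda(\overline G)\ge r+1$, equivalently that $\overline G$ has no LD-set of size $\le r$. First, $U$ is an LD-code of $G$: condition i) forces $W$ to be distinguished by $U$ (no twins in $W$ means distinct neighborhoods, and since $|W|>|U|\ge \dots$ actually I must verify $U$ is dominating and distinguishing — domination of $W$ holds since $G$ is connected bipartite, and distinguishing holds because two vertices of $W$ with the same trace on $U$ have the same full neighborhood, i.e. are twins). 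So $\lambda(G)\le r$; and $\lambda(G)\ge r$ because any LD-set must hit, for each "twin-class", all but one vertex — here one shows $\lambda(G)=r$ using conditions i)–iii). Finally, suppose for contradiction that $\overline G$ has an LD-set $T$ with $|T|\le r$. Since distinguishing sets of $G$ and $\overline G$ coincide (Proposition~\ref{pro:basic}(1)), $T$ is a distinguishing set of $G$ of size $\le r=\lambda(G)$, hence $|T|=r$ and $T$ is an LD-code of $G$; by Corollary~\ref{corin}'s uniqueness, $T=U$. But $N(w)=U$ in $G$ means $w$ has \emph{no} neighbor in $U$ in $\overline G$, so $U$ does not dominate $w$ in $\overline G$ — contradicting that $T=U$ is an LD-set of $\overline G$. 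This last contradiction closes the argument; I would use condition iii) precisely to guarantee that $U$ is the \emph{unique} LD-code (so that no other size-$r$ set sneaks in), which is the delicate point that ties iii) back into the equivalence.

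In summary, the two "easy" directions are the domination bookkeeping via Propositions~\ref{pro.domi} and~\ref{pro.vertexdom}, and the genuinely hard ingredient — in both directions — is establishing that "$\ge 2$ edges of each label in $G^U$" is exactly the combinatorial condition guaranteeing that $U$ is the \emph{unique} LD-code of $G$; this is where I expect to invoke the cactus lemmas (Lemma~\ref{lem:claim.subgraphcactus}, Corollary~\ref{cor:claim.subgraphcactus}) and the component-counting inequality (Lemma~\ref{componentsSubgraf}), since deleting a set $S'\subseteq U$ and asking whether a smaller distinguishing set survives amounts to bounding $|V(H)|-\cc(H)$ for the subgraph $H$ of $G^U$ carrying two edges per label in $S'$.
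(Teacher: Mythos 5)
Your necessity direction is essentially the paper's argument: conditions i) and ii) follow from the uniqueness of $U$ as LD-code (Corollary~\ref{corin}) together with Proposition~\ref{pro.vertexdom}, and condition iii) is obtained by contradiction, exhibiting an $r$-element LD-set of $\overline G$ of the form $(U\setminus\{u\})\cup\{w'\}$ when some label $u$ occurs on at most one edge of $G^U$. One detail you leave open matters: in the case of exactly one $u$-labeled edge $xy$, the correct choice is $w'=x$ where $ux\in E(G)$ (not $w'=w_0$), since adding $x$ simultaneously repairs the unique pair that $U\setminus\{u\}$ fails to distinguish and restores domination in $\overline G$. Also, no cactus machinery is needed in this direction; it belongs entirely to the other one.

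The sufficiency direction has a genuine gap. Your plan is to take an LD-set $T$ of $\overline G$ with $|T|\le r$, observe that $T$ is distinguishing in $G$, conclude that $T$ is an LD-code of $G$, and invoke the uniqueness of $U$ from Corollary~\ref{corin} to force $T=U$, contradicting condition ii). This fails twice over. First, Corollary~\ref{corin} has $\lambda(\overline G)=\lambda(G)+1$ as its hypothesis, which is exactly what is being proved, so appealing to it here is circular; you acknowledge that iii) should substitute for it, but you never derive uniqueness (or any workable surrogate) from iii). Second, even granting uniqueness of the LD-code, a distinguishing set of $G$ of size $r$ that dominates in $\overline G$ need not dominate in $G$, so the step ``$T$ is an LD-code of $G$'' does not follow, and there may be many distinguishing $r$-sets besides $U$. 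The paper's actual argument avoids this entirely: for an LD-code $S$ of $\overline G$ with $U\setminus S\ne\emptyset$, it considers the subgraph $H_{U\setminus S}$ of $G^U$ induced by the edges labeled in $U\setminus S$; since $W$ is a clique in $\overline G$, two vertices of the same component of $H_{U\setminus S}$ lying outside $S$ would have equal traces on $S$ in $\overline G$, so $|S\cap W|\ge |V(H_{U\setminus S})|-\cc(H_{U\setminus S})$. Condition iii) then supplies a subgraph $H$ with exactly two edges per label in $U\setminus S$, and Lemmas~\ref{lem:claim.subgraphcactus}, \ref{lemacactus} and \ref{componentsSubgraf} give $|V(H_{U\setminus S})|-\cc(H_{U\setminus S})\ge \frac32\,|U\setminus S|$, whence $|S|\ge |U|+\frac12|U\setminus S|>r$. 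Your closing paragraph points at these lemmas but does not contain this counting argument, and without it the lower bound $\lambda(\overline G)\ge r+1$ is not established.
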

\begin{proof}
	$\Leftarrow )$ 
	Condition $i)$ implies that $U$ is a distinguishing set. Moreover, $U$ is an LD-set of $G$, because $G$ is connected. Hence, $\lambda (G)\le r$.  
	Let $S$ be an LD-code of $\overline{G}$. 
	We next prove that $S$ has at least $r+1$ vertices.
	Condition $ii)$ implies that $U$ is not a dominating set in $\overline{G}$, thus $S\not= U$. 
	If $U\subseteq S$, then $|S|\ge |U|+1=r+1$ and we are done.
	If $U\setminus S\not= \emptyset$, consider the graph $G^U$ associated with $U$.
	Let $H_{U\setminus S}$ be the subgraph of  $G^{U}$  induced by the set of edges with a label in $U\setminus S\not= \emptyset$.
	By Corollary~\ref{pro:suprimirVarios},
	the vertices of a same connected component in $H_{U\setminus S}$ have the same neighborhood in $U\cap S$.
	Besides, $W$ induces a complete graph in  $\overline {G}$.
	Hence, $S\cap W$ must contain at least all but one vertex from every connected component of $H_{U\setminus S}$, 
	otherwise $\overline{G}$  would contain vertices with the same neighborhood in $S$.
	Therefore, $S\cap W$ has at least $|V(H_{U\setminus S})|-\cc       (H_{U\setminus S})$ vertices.
		
	Condition iii) implies that there are at least two edges with label $u$, for every $u\in U\setminus S$. Let $H$ be a subgraph of $H_{U\setminus S}$ induced by a set containing exactly two edges with label $u$ for every $u\in U\setminus S$.
	Since $U\setminus S\not= \emptyset$,  the subgraph $H$ has at least two edges.
By Proposition~\ref{claims123} (3), edges with the same label in $G^U$ have no common endpoint, thus we have $|V(H)|\ge 4$. Hence, 
by applying Lemmas~\ref{lem:claim.subgraphcactus} and \ref{lemacactus} we derive
$$|V(H)|-\cc       (H)\ge \frac34 \,  |E(H)|=\frac32 \, |U\setminus S|.$$
Since $H$ is a subgraph of $H_{U\setminus S}$, Lemma~\ref{componentsSubgraf} applies. Therefore 
\begin{align*} |S|&= |S\cap U|+|S\cap W| \\
&\ge  |S\cap U|+|V(H_{U\setminus S})|-\cc       (H_{U\setminus S}) \\
&\ge  |S\cap U|+|V(H)|-\cc       (H) \\
&\ge  |S\cap U|+ \frac 32  |U\setminus S| \\
&= |U| + \frac 12  |U\setminus S| > |U| =r\, .
\end{align*}

	$\Rightarrow)$ By Corollary~\ref{corin},  $U$ is the only LD-code of $G$ and hence,  $U$ is not an LD-set of $\overline{G}$.
	Therefore, $W$ has no twins and $N(w_0)=U$ for some $w_0\in W$.  It only remains to prove that condition iii) holds.
	Suppose on the contrary that there is at most one edge in $G^U$ with label $u$ for some $u\in U$.
	We consider two cases. 
	
	If there is no edge with label $u$, then by Proposition~\ref{pro:basic},  $U\setminus \{ u \}$ distinguishes all pairs of vertices of $W$ in $\overline{G}$.
	Let  $S=(U\setminus \{ u \})\cup \{ w_0 \}$. 
	We claim that $S$ is an LD-set of $\overline{G}$.
	Indeed, $S$ is a dominating set in $\overline{G}$, because in this graph $u$ is adjacent to any vertex in $U\setminus \{ u \}$ and vertices in $W\setminus \{ w_0 \}$ are adjacent to $w_0$. 
	It only remains to prove that $S$ distinguishes the pairs of vertices of the form $u$ and $v$, when $v\in W\setminus \{w_0 \}$. But $w_0\in N_{\overline{G}}(v)\cap S$ and $w_0\notin N_{\overline{G}}(u)\cap S$.
	Thus, $S$ is an LD-set of $\overline{G}$, implying that $\lambda (\overline{G})\le |S|=|U|=\lambda (G)$, a contradiction.
	
	If there is exactly one edge $xy$ with label $u$, then  only one of the vertices $x$ or $y$ is adjacent to $u$ in $G$. Assume that $ux\in E(G)$.  Recall that $x,y \in W$.
	By Proposition~\ref{pro:basic},  $U\setminus \{ u \}$ distinguishes all pairs of vertices of $W$, except the pair $x$ and $y$,  in $\overline{G}$.
	Let  $S=(U\setminus \{ u \})\cup \{ x \}$. 
	We claim that $S$ is an LD-set of $\overline{G}$.
	Indeed, 
	$S$ is a dominating set in $\overline{G}$, because $u$ is adjacent to any vertex in $U\setminus \{ u \}$ and vertices in $W\setminus \{ x \}$ are adjacent to $x$. 
	It only remains to prove that $S$ distinguishes the pairs of vertices of the form $u$ and $v$, when $v\in W\setminus \{x \}$. But $x\in N_{\overline{G}}(v)\cap S$ and $x\notin N_{\overline{G}}(u)\cap S$.
	Thus, $S$ is an LD-set of $\overline{G}$, implying that $\lambda (\overline{G})\le |S|=|U|=\lambda (G)$, a contradiction.
\end{proof}

Observe that condition $iii)$ of Theorem~\ref{pro.condnecsuf}  is equivalent to the existence of at least two pairs of twins in  $G-u$, for every  vertex $u\in U$.
Therefore, it can be stated as follows.

\begin{theorem}  Let $3\le r<s$. Then, 
	$\lambda (\overline{G}) =\lambda (G)  + 1$
	if and only if the following conditions hold:
	\begin{enumerate}[i)]
		\item $W$ has no twins.
		\item There exists a vertex $w\in W$ such that $N(w)=U$.
		\item For every vertex $u\in U$, the graph $G-u$ has at least two pairs of twins in  $W$.
	\end{enumerate}
\end{theorem}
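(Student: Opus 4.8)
The plan is to deduce this theorem directly from Theorem~\ref{pro.condnecsuf}. Conditions i) and ii) are verbatim the same in both statements, so it suffices to show that, once condition i) is assumed, condition iii) of Theorem~\ref{pro.condnecsuf} (``for every $u\in U$, $G^U$ has at least two edges with label $u$'') is equivalent to condition iii) above (``for every $u\in U$, $G-u$ has at least two pairs of twins in $W$''). I would fix a vertex $u\in U$ throughout and compare, edge by edge and pair by pair, the two collections being counted.

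First I would record two easy facts. Since $W$ is a stable set, two distinct vertices $x,y\in W$ are non-adjacent in $G-u$, so the closed-twin relation $N_{G-u}(x)\cup\{x\}=N_{G-u}(y)\cup\{y\}$ cannot hold between them; hence $x,y$ are twins in $G-u$ if and only if $N_{G-u}(x)=N_{G-u}(y)$, and since $N_G(x),N_G(y)\subseteq U$ this amounts to $N_G(x)\cap(U\setminus\{u\})=N_G(y)\cap(U\setminus\{u\})$. Second, condition i) forces $U$ to be a distinguishing set of $G$: if it were not, two distinct vertices of $W$ would share their whole neighborhood in $U$, hence in $G$, and would be twins, against i). (This is the observation already used at the beginning of the proof of Theorem~\ref{pro.condnecsuf}.)

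Next I would invoke Proposition~\ref{pro:basic}(4), applied to the distinguishing set $S=U$: for distinct $x,y\in W$, $xy\in E(G^U)$ with $\ell(xy)=u$ if and only if $x$ and $y$ have the same neighborhood in $U\setminus\{u\}$. Comparing with the first recorded fact, this says precisely that the edges of $G^U$ carrying the label $u$ are in bijection with the pairs of vertices of $W$ that are twins in $G-u$. Consequently ``at least two edges with label $u$'' and ``at least two pairs of twins in $W$ in $G-u$'' are equivalent conditions, and letting $u$ range over $U$ yields the equivalence of the two versions of condition iii). The theorem then follows from Theorem~\ref{pro.condnecsuf}.

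There is no serious obstacle here; the statement is essentially a cosmetic rephrasing. The only points that require a little care are: (a) checking that the open/closed twin dichotomy collapses inside the stable set $W$, so that ``twin in $G-u$'' really is ``equal neighborhood in $U\setminus\{u\}$''; and (b) making sure the equivalence of the two conditions iii) is carried out under hypothesis i), so that $U$ is distinguishing and Proposition~\ref{pro:basic}(4) legitimately applies. It is also worth noting, for completeness, that Proposition~\ref{claims123}(3) (incident edges of $G^U$ have different labels) guarantees that the twin pairs being counted are pairwise vertex-disjoint, so the counting ``at least two'' is unambiguous.
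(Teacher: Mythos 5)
Your proposal is correct and takes the same route as the paper, which simply observes (without written detail) that condition iii) of Theorem~\ref{pro.condnecsuf} is equivalent to the existence of at least two twin pairs in $W$ in $G-u$; you supply exactly the verification that is implicit there, including the two points that genuinely need checking (the collapse of the closed-twin case inside the stable set $W$, and the use of condition i) to ensure $U$ is distinguishing so that $G^U$ and Proposition~\ref{pro:basic}(4) apply).
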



We already know that  it is not possible to have $\lambda (\overline{G})=\lambda (G)+1$ when $s\ge 2^r$. 
However, the condition $s\le 2^r-1$ is not sufficient to ensure the existence of bipartite graphs satisfying $\lambda (\overline{G})=\lambda (G)+1$. 
We next show that there are graphs satisfying this equation if and only if $\frac{3r}2 +1\le s\le 2^r-1$.

\begin{proposition}\label{prop.complementariguanya}
	If $ r \ge 3$ and $\lambda (\overline{G})=\lambda (G)+1$, then $\frac{3r}2+1\le s\le 2^r-1$.
\end{proposition}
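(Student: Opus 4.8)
The plan is to read off both inequalities from results already in the paper, so that essentially no new work is needed. The upper bound $s\le 2^r-1$ (together with $3\le r<s$) is exactly the content of Corollary~\ref{corin}, which moreover tells us that $U$ is the unique LD-code of $G$; in particular $U$ is an LD-set, hence a distinguishing set of $G$. Thus all the remaining work lies in establishing the lower bound $s\ge \frac{3r}{2}+1$.

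For the lower bound I would invoke the characterization of Theorem~\ref{pro.condnecsuf}: since $3\le r<s$ and $\lambda(\overline{G})=\lambda(G)+1$, condition iii) holds, i.e.\ for every vertex $u\in U$ the associated graph $G^U$ has at least two edges with label $u$. Consequently one can select, for each $u\in U$, exactly two edges of $G^U$ labeled $u$; let $H$ be the subgraph of $G^U$ induced by this set of $2r$ edges. Now apply Corollary~\ref{cor:claim.subgraphcactus} with $S=U$, $\emptyset\neq S'=U\subseteq U$ and $r'=r$: it gives $|V(H)|\ge \frac{3}{2}r+1$.

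Finally, $V(H)\subseteq V(G^U)=V\setminus U=W$, so $s=|W|\ge |V(H)|\ge \frac{3r}{2}+1$, and combining with the upper bound yields $\frac{3r}{2}+1\le s\le 2^r-1$, as claimed.

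I do not expect a genuine obstacle here: the argument is a repackaging of the counting already carried out in the ($\Leftarrow$) direction of Theorem~\ref{pro.condnecsuf}. The only points requiring a little care are checking the hypotheses of Corollary~\ref{cor:claim.subgraphcactus}, namely that $U$ really is a distinguishing set (which follows from condition i) of Theorem~\ref{pro.condnecsuf}, equivalently from Corollary~\ref{corin}) and that $H$ has at least four vertices because two edges of $G^U$ sharing a label have no common endpoint (Proposition~\ref{claims123}(3)). Were Corollary~\ref{cor:claim.subgraphcactus} not available, the substance would instead be Lemma~\ref{lem:claim.subgraphcactus} (every connected component of $H$ is a cactus) followed by the Euler-formula estimate of Lemma~\ref{lemacactus}.
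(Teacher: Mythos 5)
Your proof is correct and follows essentially the same route as the paper: the upper bound from Corollary~\ref{corin}, and the lower bound by extracting from condition~iii) of Theorem~\ref{pro.condnecsuf} a subgraph $H$ of $G^U$ with exactly two edges per label and applying the cactus bound $|V(H)|\ge \frac{3r}{2}+1$ (the paper cites this as ``Corollary~\ref{lemacactus}'', evidently meaning Corollary~\ref{cor:claim.subgraphcactus}, which is the result you correctly invoke).
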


\begin{proof}
	If $ r \ge 3$ and $\lambda (\overline{G})=\lambda (G)+1$, then by Corollary \ref{corin}, we have that $s\le 2^r-1$ and $U$ is the only LD-code of $G$. 
	Moreover, since $G$ satisfies Condition iii) of Theorem~\ref{pro.condnecsuf}, the $U$-associated graph
	$G^U$ contains a subgraph $H$ with exactly two edges labeled with $u$, for every $u\in U$.
	Recall that $V(H)\subseteq V(G)\setminus U=W$.
	Hence, by Corollary \ref{lemacactus}, we have $s=|W|\ge |V(H)|\ge \frac{3r}2+1$.
\end{proof}

\begin{proposition}\label{prop.construccio}
	For every pair of integers $r$ and $s$ such that $3\le r$ and $\frac{3r}2 +1\le s\le 2^r-1$, there exists a bipartite graph $G(r,s)$ such that $\lambda (\overline{G})=\lambda (G)+1$.
\end{proposition}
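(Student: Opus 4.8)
The plan is to use the characterization of the previous theorem to convert the statement into a purely combinatorial problem about set systems, and then to write down explicit families.

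Identify each vertex $w$ of the large part $W$ with the subset $N(w)\subseteq U$. A bipartite graph with parts $U$ (of size $r$) and $W$ (of size $s$) then corresponds to a family $\mathcal A=\{N(w):w\in W\}$ of subsets of $U$; since $U$ has to dominate, every member of $\mathcal A$ is nonempty, and $W$ has no twins precisely when the members of $\mathcal A$ are pairwise distinct, so $|\mathcal A|=s$. Two vertices $x,y\in W$ are adjacent in $G^{U}$ with label $u$ exactly when $N(x)$ and $N(y)$ differ only in $u$. Hence, by the preceding theorem, it suffices to produce, for each admissible pair $(r,s)$, a family $\mathcal A$ of $s$ distinct nonempty subsets of $U$ such that (ii) $U\in\mathcal A$ and (iii) for every $u\in U$ there are at least two pairs $\{A,A\cup\{u\}\}$ with both members in $\mathcal A$; condition (i) is just distinctness, and such a family automatically yields a connected $G$, since the vertex with neighbourhood $U$ is joined to all of $U$ while every other vertex of $W$ has a neighbour in $U$.

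The key reduction is a \emph{padding} remark: if $\mathcal A$ satisfies (i)--(iii) and is not the family of all nonempty subsets of $U$, then adjoining any nonempty subset $A\notin\mathcal A$ again satisfies (i)--(iii), because distinctness persists, $U$ stays in the family, and enlarging a family can only create new label-$u$ edges. Consequently it is enough to build, for every $r\ge 3$, one family $\mathcal A_{0}$ of the smallest admissible size $s_{0}=\lceil\frac{3r}{2}\rceil+1$; padding then delivers families of every cardinality from $s_{0}$ up to $2^{r}-1$, which together with Proposition~\ref{prop.complementariguanya} is exactly the asserted interval. For even $r=2m$ I would take $\mathcal A_{0}$ to be the vertex set of a chain of $m$ squares of the cube: with $L_{j}=\{1,\dots,2j\}$, glue the $4$-cycles $Y_{j}=\{L_{j-1},\,L_{j-1}\cup\{2j-1\},\,L_{j},\,L_{j-1}\cup\{2j\}\}$ for $j=2,\dots,m$ (consecutive squares sharing $L_{j}$) and replace the first square $Y_{1}$, which would use $\emptyset$, by $\widetilde Y_{1}=\{\{3\},\{1,3\},\{2,3\},\{1,2,3\}\}$, attached to the chain at $\{1,2,3\}\in Y_{2}$; this produces $3m+1=s_{0}$ distinct nonempty sets, contains $L_{m}=U$, and puts exactly two edges on each label $u$. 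For odd $r=2m+1$ with $m\ge 2$, start from the family just built for the labels $\{1,\dots,2m\}$ and adjoin $U=\{1,\dots,2m+1\}$ and $\{1,2,2m+1\}$: this creates the two label-$(2m+1)$ edges $\{L_{m},U\}$ and $\{\{1,2\},\{1,2,2m+1\}\}$ and brings the size to $(3m+1)+2=3m+3=s_{0}$. The sole remaining case $r=3$ (where $s\in\{6,7\}$) is done by hand: for $s=7$ take all nonempty subsets of $U$, and for $s=6$ delete one singleton.

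The step I expect to be most delicate is the construction of $\mathcal A_{0}$ at exactly the minimum size $s_{0}$: it must simultaneously avoid $\emptyset$, contain $U$, and realize two label-$u$ edges for \emph{every} $u$, and it is precisely here---in the odd case and in handling the degenerate first square---that the inequality $s\ge\frac{3r}{2}+1$ is forced to be tight. Verifying condition (iii) for $\mathcal A_{0}$ then reduces to a short check that, for each $u$, no label-$u$ edges are lost or accidentally added beyond the two built in, by inspecting which sets of $\mathcal A_{0}$ contain $u$.
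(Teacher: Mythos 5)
Your proposal is correct and follows essentially the same route as the paper: reduce via Theorem~\ref{pro.condnecsuf} to constructing a family of $s$ distinct nonempty subsets of $U$ that contains $U$ and carries at least two label-$u$ edges for every $u$, exhibit one such family of the minimum size $\lceil \frac{3r}{2}\rceil+1$, and pad with arbitrary further nonempty subsets up to $2^r-1$. The only divergence is the explicit minimal family: the paper takes $[r]$, all sets $[r]\setminus\{i\}$, and the sets $[r]\setminus\{2i-1,2i\}$ (concentrated in the top three levels of the Boolean lattice), which avoids your chain-of-squares bookkeeping and the separate even/odd and $r=3$ cases, but your witness family also verifies.
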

\begin{proof} 
	Let $s=\Big\lceil \frac{3r}2 +1 \Big\rceil$. Let $[r]=\{ 1,2,\dots ,r\}$ and let $\mathcal{P}([r])$ denote the power set of $[r]$. 
	Take the bipartite graph $G(r,\Big\lceil \frac{3r}2 +1 \Big\rceil)$ such that $V=U\cup W$, $U=[r]$, and
	$W\subseteq \mathcal{P}([r])\setminus \{ \emptyset \}$ is defined as follows (see Figure \ref{fig.exemples}).
	For $r=2k$ even:
	\begin{align*}
	W = & \Big\{ [r] \Big\}
	\cup \Big\{ [r]\setminus \{ i \} : i\in [r] \Big\}
	\cup \Big\{ [r]\setminus \{ 2i-1,2i\} : 1\le i\le k \Big\}\phantom{ \cup  \Big\{  [r]\setminus \{ r-1,r \} \Big\}}
	\end{align*}
	and for $r=2k+1$  odd:
	\begin{align*}
	W = & \Big\{ [r] \Big\}
	\cup \Big\{ [r]\setminus \{ i \} : i\in [r] \Big\}
	\cup \Big\{ [r]\setminus \{ 2i-1,2i\} : 1\le i\le k \Big\}
	\cup  \Big\{  [r]\setminus \{ r-1,r \} \Big\} \, .
	\end{align*}		
	
	\begin{figure}[!hbt]
		\begin{center}
			\includegraphics[height=3.1cm]{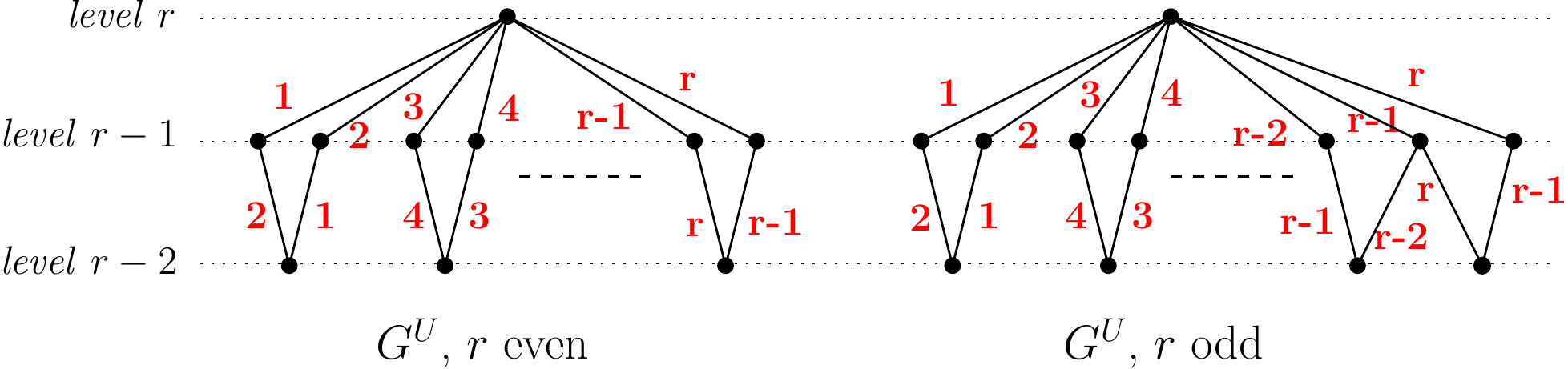}\label{GU}
			\caption{The labeled graph $G^U$, for $G=G(r,\Big\lceil \frac{3r}2 +1 \Big\rceil)$ and $U=\{1, \dots , r \}$.}\label{fig.exemples}
		\end{center}
	\end{figure}	
	
The edges of $G(r,\Big\lceil \frac{3r}2 +1 \Big\rceil)$ are defined as follows. 
If $u\in U=[r]$ and $w\in W$, then $u$ and $w$ are adjacent if and only if $u\in w$. 
	
	By construction, $W$ has no twins, there is a vertex $w$ such that $N(w)=U$ and the $U$-associated graph, $G^U$, has at least two edges with label $u$ for every $u\in U$.
	Hence, $\lambda (\overline{G})=\lambda (G) +1$ by Theorem~\ref{pro.condnecsuf}.

Finally, if $\lceil \frac{3r}2 +1 \rceil<s\le 2^r-1=|\mathcal{P}([r])\setminus \{ \emptyset \} |$, 
consider a set $W'$ obtained by adding $s- \lceil \frac{3r}2 +1 \rceil$ different subsets from $\mathcal{P}([r]) \setminus (W\cup \{ \emptyset \})$ to the set $W$.
Take the bipartite graph $G'$ having $U\cup W'$ as set of vertices and edges defined as before, i.e., for every $u\in U$ and $w\in W'$,
$uw\in E(G')$ if and only if $u\in w$.
Then, $|W'|=s$ and, by construction, $W'$ has no twins.
Moreover, the vertex $[r]\in W\subseteq W' $ satisfies $N([r])=U$ and,  since $W\subseteq W'$,  the U-associated graph $(G')^U$
has at least two edges with label $u$ for every $u\in U$. By Theorem~\ref{pro.condnecsuf}, $\lambda (\overline{G'})=\lambda (G')+1$, and the proof is completed.
\end{proof}

\vspace{-.4cm}


\end{document}